\documentclass[a4paper]{amsart}
\usepackage{graphicx}
\usepackage{amssymb}
\usepackage{amsmath}
\usepackage{amsthm}
\usepackage{amscd}
\usepackage{color}
\usepackage{colordvi}
\usepackage[all,2cell]{xy}
\usepackage{enumitem}
\usepackage[colorlinks=true,citecolor=blue,linkcolor=blue,urlcolor=blue]{hyperref}
\UseAllTwocells \SilentMatrices
\newtheorem{thm}{Theorem}[section]

\newtheorem{cor}[thm]{Corollary}
\newtheorem{lem}[thm]{Lemma}
\newtheorem{exm}[thm]{Example}

\newtheorem{prop}[thm]{Proposition}
\theoremstyle{definition}

\theoremstyle{remark}
\newtheorem{rem}[thm]{\bf Remark}
\numberwithin{equation}{section}
\newcommand{\Mod}{\operatorname{Mod}}
\newcommand{\Hom}{\operatorname{Hom}}
\newcommand{\Ext}{\operatorname{Ext}}
\newcommand{\End}{\operatorname{End}}
\newcommand{\Tor}{\operatorname{Tor}}
\newcommand{\add}{\operatorname{add}}
\newcommand{\Ind}{\operatorname{Ind}}
\newcommand{\Res}{\operatorname{Res}}
\newcommand{\op}{\mathrm{op}}
\newcommand{\KT}{\mathbf{KT}}
\newcommand{\KE}{\mathbf{KE}}
\newcommand{\perpR}[1]{{}^{\perp_R}\!#1}

\newcommand{\pd}{\mathrm{proj.dim}}

\begin{document}
\title[Wakamatsu tilting modules along Frobenius extensions]
{Transfer of Wakamatsu tilting modules along Frobenius extensions}
\author[Wei Ren, Chunxia Zhang] {Wei Ren, Chunxia Zhang$^*$}

\subjclass[2020]{16D90, 16E30, 16S50}

\thanks{$^*$Corresponding author; e-mail: cxzhang$\symbol{64}$cqnu.edu.cn}
%\keywords{Wakamatsu tilting module, Frobenius extension}%
%\date{\today}

\begin{abstract}
Let $\iota: R\to A$ be a Frobenius extension and let $T$ be a Wakamatsu tilting left $R$-module. We give sufficient conditions for the induced module $A\otimes_R T$ to remain Wakamatsu tilting and establish an ascent--descent result for split centrally projective Frobenius extensions. We also characterize the tilting case by the vanishing of
$\Ext_R^i(T,A\otimes_R T)$ for all $i>0$. Moreover, if $A\otimes_R T\in\add_R(T)$, then the natural map $S=\End_R(T)\to B=\End_A(A\otimes_R T)$ of endomorphism rings is a Frobenius extension and
$T\otimes_S B\cong A\otimes_R T$ as $R$-$B$-bimodules. Applications to Brenner--Butler--Miyashita equivalences and some specific classes of Frobenius extensions are also discussed.
\end{abstract}

\maketitle
\tableofcontents

\section{Introduction}

\noindent Frobenius extensions form a natural class of ring extensions in which
induction and coinduction coincide. Originating in the work of Kasch \cite{Kas},
Nakayama--Tsuzuku \cite{NT} and Morita \cite{Mor65, Mor67},
they provide an effective setting for transferring homological properties:
if $\iota:R\to A$ is a Frobenius
extension, then the induction $A\otimes_R-:\Mod(R)\rightarrow\Mod(A)$
is both a left and a right adjoint of the restriction. This
two-sided adjunction yields relations between homological
invariants, and has been applied to Gorenstein
homological algebra, tilting theory and related approximation
phenomena; see, for example \cite{BLZ, FXZ, GHZ, LT,  Ren, Xi, Zhao}.

Tilting theory is a natural framework for such transfer
phenomena. Classical tilting modules are closely connected with equivalences
of categories and torsion-theoretic constructions; see, for example
\cite{BB,Hap,HHK,YMiy}. Wakamatsu tilting modules \cite{Wak88, Wak90} extend this
framework by replacing the finite $\add(T)$-coresolutions with proper
infinite ones, making homological transfer more delicate; this is reflected in
the Wakamatsu Tilting Conjecture \cite{BR, DFT}.

We therefore ask when a Wakamatsu tilting $R$-module
$T$ remains Wakamatsu tilting after induction to a Frobenius extension,
i.e. when $U=A\otimes_R T$ is Wakamatsu tilting over $A$.
This question has been considered from
several points of view. For example, Zhang and Wei studied this question under certain conditions
in \cite{ZW}; Bao, L\"u and Zhao proved the preservation for centrally
projective Frobenius extensions \cite{BLZ}, and related results for
generalized tilting modules appear in \cite{FXZ}. 

Our first aim is to isolate the underlying homological condition.
For a Frobenius extension $R\to A$ and a
Wakamatsu tilting $R$-module $T$, Theorem \ref{thm:ascent} shows that $A\otimes_R T$ is
Wakamatsu tilting over $A$ whenever
${}^{\perp_R}T \subseteq {}^{\perp_R}(A\otimes_R T)$,
where ${}^{\perp_R}T$ denotes the left Ext-orthogonal class of $T$.
This condition controls both self-orthogonality and properness of the induced coresolution.
It is automatic if $A\otimes_R T\in\operatorname{add}_R(T)$ as an $R$-module,
so the centrally projective case follows immediately (see Corollary \ref{cor:asc+2}),
recovering the corresponding induction result of \cite{BLZ}.
The Ext-orthogonal formulation makes clear which part of central projectivity is exactly used.

For tilting modules of finite projective dimension, the situation
simplifies: if $T$ is a tilting
$R$-module and $U=A\otimes_R T$, then $U$ is tilting over $A$ if and
only if $\Ext_R^i(T,U)=0$ for all $i>0$ (Proposition \ref{prop:tilting}).
This may be regarded as a module-theoretic analogue for Frobenius extensions of
Miyachi's criterion for tilting complexes \cite{JMiy}. In contrast to the Wakamatsu tilting case,
the finite coresolution is preserved directly by induction, so self-orthogonality is the essential obstruction.

We also study the descent. If $R\to A$ is a split
Frobenius extension, then $T$ is a direct summand of the restricted
$R$-module $A\otimes_R T$. Under the additional condition $A\otimes_R T\in\operatorname{add}_R(T)$,
the Wakamatsu tilting property descends from $A$ to $R$ (Theorem \ref{thm:desc}).
Combining ascent and descent, for a split centrally projective Frobenius extension,
$T$ is Wakamatsu tilting over $R$ if and only if
$A\otimes_R T$ is Wakamatsu tilting over $A$ (Corollary \ref{cor:asc-desc}).

A second theme concerns the induced endomorphism rings. Let
$S=\End_R(T)$, and $B=\End_A(A\otimes_R T)$. We show that if $A\otimes_R T\in\add_R(T)$,
then the natural homomorphism $S\to B$ is a Frobenius extension, and there is a natural isomorphism of
$R$-$B$-bimodules $T\otimes_S B\cong A\otimes_R T$ (Theorem \ref{thm:end-extension}).
No tilting, self-orthogonality, or finite projective dimension assumption is needed.
In particular, for a centrally projective Frobenius extension this holds for every
$R$-module $T$ (Corollary \ref{cor:end-central}), recovering the endomorphism-ring conclusion of
\cite[Theorem A]{BLZ} %for Wakamatsu tilting modules
while removing the additional hypothesis of finite projective dimension required there.
Consequences include Morita-theoretic transfer for progenerators, invariance
of Frobenius extensions under matrix rings, a corner criterion, and a result
for additive generators over Artin algebras; see Corollaries~\ref{cor:morita-end}--\ref{cor:artin}.

There is also a close connection with semidualizing bimodules. If $T$ is a Wakamatsu tilting left
$R$-module and $S=\End_R(T)$, then ${}_RT_S$ is semidualizing by \cite[Corollary~3.2]{Wak04};
the associated Auslander and Bass classes \cite{HW} are naturally available in the present setting.
This viewpoint allows us to relate the two Frobenius extensions $R\to A$ and $S\to B$.
We show compatibility of extension of scalars with the additive equivalences determined by
${}_RT_S$ and ${}_AU_B$ (Proposition \ref{prop:add-compat}), and with the corresponding
Brenner--Butler--Miyashita equivalences when $T$ has finite projective dimension on both sides
(Proposition \ref{prop:BBM}, Corollary \ref{cor:BBM-compat}); compare
\cite{BB, CH, YMiy}.

Finally, we illustrate the results by examples of Frobenius extensions. Finite group ring extensions and truncated
polynomial extensions are split centrally projective, so both ascent and descent apply
(Example \ref{exm:RG}). Skew group rings show that the additive criterion can apply beyond the centrally projective case: although central projectivity may fail, $A\otimes_R T\in\add_R(T)$ holds whenever every twist of
$T$ under the group action belongs to $\add_R(T)$ (Example~\ref{exm:skew-group-end}).

The paper is organized as follows. Section~\ref{sec:Waka} studies ascent and
descent of Wakamatsu tilting modules, including the finite projective dimension case and its relation to Miyachi's theorem. Section \ref{sec:endo} proves the endomorphism ring transfer theorem. Finally, Section \ref{sec:app-exm} develops applications to Brenner--Butler--Miyashita equivalences and gives examples.
\medskip

\section{Ascent and descent of Wakamatsu tilting modules}
\label{sec:Waka}

\noindent Throughout, all rings are associative with a unit, and all modules are left modules unless otherwise specified. We write $\Mod(R)$ for the category of left $R$-modules, and identify right $R$-modules with left $R^{\op}$-modules, where $R^{\op}$ is the opposite ring of $R$. For a module $M$, let $\add(M)$ denote the full subcategory consisting of direct summands of finite direct sums of copies of $M$. For an $R$-module $T$, we denote by
\[ \perpR T := \{X\mid \Ext_R^i(X,T)=0\text{ for all }i>0 \}. \]

An $R$-module $T$ is Wakamatsu tilting \cite{Wak04} if it satisfies the following three conditions: (1) $T$ admits a projective resolution by finitely generated projective modules, (2) $\Ext_R^i(T,T)=0$ for all $i>0$, and (3)
there is an exact sequence
\[ 0\longrightarrow R\longrightarrow T_0\longrightarrow T_1 \longrightarrow T_2\longrightarrow\cdots \]
with each $T_j\in \add_R(T)$, and the sequence remains exact after applying $\Hom_R(-,T)$. Such a sequence is called a proper $\add_R(T)$-coresolution of $R$.

A ring extension $\iota: R\to A$ is a Frobenius extension \cite{Kas, Kad} if
$A$ is finitely generated projective as a left $R$-module and there is an isomorphism
$A\cong \Hom_R(A, R)$ of $A$-$R$-bimodules.
Equivalently, $A$ is finitely generated projective as a right $R$-module and
$A\cong \Hom_{R^{\op}}(A, R)$ as $R$-$A$-bimodules.
Moreover, the Frobenius property of the extension $\iota:R\to A$ is
equivalent to the existence of a natural isomorphism between the
induction and coinduction functors
$ A\otimes_R-\ \cong\ \Hom_R(A,-): \Mod(R) \longrightarrow\Mod(A)$.
In this case, both $(\Ind, \Res)$ and $(\Res, \Ind)$ are adjoint pairs,
where $\Ind = A\otimes_{R}-$, and $\Res: \Mod(A)\rightarrow \Mod(R)$ denotes the restriction functor.

We begin by recording the following standard isomorphisms, which will be
used repeatedly in the sequel. Although they are well known, we include
a brief proof for completeness. Whenever an $A$-module occurs in an $\Ext$-group over $R$, it is
understood to be restricted along $\iota:R\to A$; we suppress $\Res$
from the notation when no ambiguity can arise.

\begin{lem}\label{lem:isoms}%2.1
Let $\iota: R\to A$ be a Frobenius extension. For all $R$-modules $M,N$,
all $A$-modules $X$, and all $i\geq 0$, there are natural isomorphisms
\[ \Ext_A^i(A\otimes_R M, X)\cong \Ext_R^i(M, X), \quad
 \Ext_A^i(X, A\otimes_R N)\cong \Ext_R^i(X, N).\]
Consequently, we have $\Ext_R^i(M, A\otimes_R N)\cong \Ext_R^i(A\otimes_R M,N)$.
\end{lem}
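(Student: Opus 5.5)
The plan is to reduce both isomorphisms to the degree-zero adjunctions $(\Ind,\Res)$ and $(\Res,\Ind)$, and then pass to derived functors by resolving.

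\emph{First isomorphism.} Take a projective resolution $P_\bullet\to M$ in $\Mod(R)$. Since $A$ is projective as a right $R$-module, $A\otimes_R-$ is exact. It also sends projective $R$-modules to projective $A$-modules, because it is left adjoint to the exact functor $\Res$ (equivalently, $A\otimes_R R\cong A$ and induction commutes with direct sums and summands). Hence $A\otimes_R P_\bullet\to A\otimes_R M$ is a projective resolution over $A$. The natural adjunction isomorphism $\Hom_A(A\otimes_R P_j,X)\cong \Hom_R(P_j,X)$ gives an isomorphism of complexes
\[ \Hom_A(A\otimes_R P_\bullet,X)\cong \Hom_R(P_\bullet,X), \]
and taking cohomology yields $\Ext_A^i(A\otimes_R M,X)\cong\Ext_R^i(M,X)$. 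Naturality in $M$ and $X$ follows from the comparison theorem together with the naturality of the adjunction.

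\emph{Second isomorphism.} Dually, take an injective resolution $N\to I^\bullet$ in $\Mod(R)$. The Frobenius property gives $A\otimes_R-\cong\Hom_R(A,-)$. Since ${}_RA$ is projective, $\Hom_R(A,-)$ is exact, and as a right adjoint of the exact functor $\Res$ it preserves injectives. So $A\otimes_R N\cong\Hom_R(A,N)\to\Hom_R(A,I^\bullet)$ is an injective resolution over $A$. The adjunction $\Hom_A(X,\Hom_R(A,I^j))\cong\Hom_R(X,I^j)$ gives
\[ \Hom_A(X,\Hom_R(A,I^\bullet))\cong\Hom_R(X,I^\bullet), \]
and taking cohomology gives $\Ext_A^i(X,A\otimes_R N)\cong\Ext_R^i(X,N)$.

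\emph{Consequence.} Apply the second isomorphism with $X=A\otimes_R M$, and the first with $X=A\otimes_R N$:
\[ \Ext_R^i(A\otimes_R M,N)\cong\Ext_A^i(A\otimes_R M,A\otimes_R N)\cong\Ext_R^i(M,A\otimes_R N). \]

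The only point needing care is checking that $\Ind$ preserves projectives and is exact, and that $\Hom_R(A,-)$ preserves injectives and is exact. Both reduce to the standard principle that a functor with an exact adjoint preserves projectives (for left adjoints) or injectives (for right adjoints), combined with the one-sided projectivity of $A$ built into the definition of a Frobenius extension.
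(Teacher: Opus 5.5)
Your proof is correct and follows the paper's argument: you obtain the first isomorphism exactly as the paper does, by inducing a projective resolution of $M$ and applying the $(\Ind,\Res)$ adjunction termwise, and you get the consequence from the same composite through $\Ext_A^i(A\otimes_R M,A\otimes_R N)$. The only difference is a dual choice of resolution for the second isomorphism: the paper resolves $X$ by projective $A$-modules and restricts them (they stay projective over $R$ because ${}_RA$ is projective) before applying $\Hom_A(-,\Hom_R(A,N))\cong\Hom_R(-,N)$, whereas you resolve $N$ by injectives and use that $\Hom_R(A,-)$ is exact and preserves injectives; both derive the same $(\Res,\Ind)$ adjunction and are equally valid.
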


\begin{proof}
Since $A$ is a projective right $R$-module, the induction functor $A\otimes_R-$
is exact, and moreover, it sends projective $R$-modules to projective
$A$-modules. Therefore the usual induction--restriction adjunction
$(\Ind, \Res)$ extends to derived functors, yielding
\[
\Ext_A^i(A\otimes_R M,X)\cong \Ext_R^i(M, X).
\]
Note that any projective resolution of $X$ over $A$ restricts to a projective
resolution of $X$ over $R$. Then, extending the adjunction
$(\Res, \Ind)$ to derived functors, it yields
$\Ext_A^i(X, A\otimes_R N)\cong \Ext_R^i(X, N)$.

Finally, applying the above isomorphisms with $X=A\otimes_RN$ and
$X=A\otimes_RM$ respectively, it gives the isomorphisms
\[\Ext_R^i\bigl(M, A\otimes_RN\bigr)\cong \Ext_A^i(A\otimes_RM,A\otimes_RN)
 \cong \Ext_R^i\bigl(A\otimes_RM, N\bigr).\]
This completes the proof.
\end{proof}

A natural question is whether Wakamatsu tilting modules are preserved under
induction along a Frobenius extension. The derived adjunction shows that the
self-orthogonality of the induced module $U=A\otimes_R T$
is controlled by Ext-groups over $R$. On the other hand, to ensure that a
proper $\add_R(T)$-coresolution of $R$ induces a proper
$\add_A(U)$-coresolution of $A$, one needs the corresponding vanishing for
the successive cosyzygies occurring in the original coresolution. This
motivates the following sufficient condition in terms of the left
Ext-orthogonal class of $T$.

%Let $U=A\otimes_R T$. Since we always suppress $\Res$ from the notation whenever no ambiguity can arise,
%when Ext-groups involving $U$ are taken over $R$, $U$ is understood as the underlying $R$-module.

\begin{thm}\label{thm:ascent}%2.2
Let $\iota: R\to A$ be a Frobenius extension and let $T$ be a Wakamatsu tilting
$R$-module. Assume that $\perpR T\subseteq \perpR U$.
Then $U=A\otimes_R T$ is a Wakamatsu tilting $A$-module.
\end{thm}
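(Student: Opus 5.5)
We verify the three defining conditions of a Wakamatsu tilting module for $U=A\otimes_R T$ one at a time.

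\emph{Condition (1).} Choose a resolution $\cdots\to P_1\to P_0\to T\to 0$ of $T$ by finitely generated projective $R$-modules. Since $A_R$ is projective, $A\otimes_R-$ is exact. It also sends $R$ to $A$, and hence finitely generated projective $R$-modules to finitely generated projective $A$-modules. So $A\otimes_R P_\bullet\to U$ is a resolution of $U$ by finitely generated projective $A$-modules.

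\emph{Condition (2).} Lemma \ref{lem:isoms} gives
\[\Ext_A^i(U,U)\cong \Ext_R^i(T,U)\]
for all $i>0$. Because $T$ is Wakamatsu tilting, $T\in\perpR T$. The hypothesis $\perpR T\subseteq\perpR U$ then gives $\Ext_R^i(T,U)=0$ for $i>0$.

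\emph{Condition (3).} Take a proper $\add_R(T)$-coresolution
\[0\to R\to T_0\to T_1\to\cdots\]
and write $K_0=R$ and $K_j$ for its $j$-th cosyzygy, so that we have short exact sequences $0\to K_j\to T_j\to K_{j+1}\to 0$. Applying the exact functor $A\otimes_R-$ yields an exact sequence
\[0\to A\to A\otimes_R T_0\to A\otimes_R T_1\to\cdots,\]
and each $A\otimes_R T_j$ lies in $\add_A(U)$, since induction commutes with finite direct sums and summands. It remains to show this sequence is proper, i.e.\ stays exact under $\Hom_A(-,U)$. By the adjunction $(\Ind,\Res)$, $\Hom_A(A\otimes_R -,U)\cong\Hom_R(-,U)$ naturally. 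So it suffices that each short exact sequence $0\to K_j\to T_j\to K_{j+1}\to 0$ stays exact under $\Hom_R(-,U)$. This in turn follows once $\Ext_R^1(K_{j+1},U)=0$ for all $j$.

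\emph{Main step: $K_j\in\perpR T$ for all $j$.} This is where the hypothesis enters, and it is the main obstacle. Once $K_j\in\perpR T$, the hypothesis gives $K_j\in\perpR U$, which supplies exactly the vanishing needed above.

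I would first show $\Ext^1_R(K_{j+1},T)=0$. Properness of the original coresolution means $\Hom_R(T_j,T)\to\Hom_R(K_j,T)$ is surjective. Since also $\Ext^1_R(T_j,T)=0$, the long exact sequence gives $\Ext^1_R(K_{j+1},T)=0$.

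For higher degrees, I would use dimension shifting along the same sequences, which gives
\[\Ext^{i+1}_R(K_{j+1},T)\cong\Ext^i_R(K_j,T)\quad (i\ge1),\]
because $T_j\in\perpR T$. Starting from $K_0=R$, which is projective, induction on $j$ then gives $\Ext^i_R(K_j,T)=0$ for all $i\ge 1$ and all $j$. Hence $K_j\in\perpR T$ for every $j$.

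This completes all three conditions. The only delicate point is the correct bookkeeping of the two ingredients: properness supplies the degree-one vanishing, and dimension shifting supplies the rest.
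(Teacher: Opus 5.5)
Your proposal is correct and follows essentially the same route as the paper: induce the finitely generated projective resolution, get self-orthogonality from $\Ext_A^i(U,U)\cong\Ext_R^i(T,U)$ together with $T\in\perpR T\subseteq\perpR U$, and show that the cosyzygies of the proper coresolution lie in $\perpR T$ (properness for degree one, dimension shifting for higher degrees), hence in $\perpR U$. The only cosmetic difference is that you check properness of the induced coresolution via the Hom-level adjunction $\Hom_A(A\otimes_R-,U)\cong\Hom_R(-,U)$, whereas the paper phrases this step as $\Ext_A^1(A\otimes_R L_{j+1},U)\cong\Ext_R^1(L_{j+1},U)=0$ using Lemma~\ref{lem:isoms}.
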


\begin{proof}
We verify the defining conditions for $U=A\otimes_R T$ to be a Wakamatsu tilting $A$-module. First, let
$\cdots\rightarrow P_1\rightarrow P_0 \rightarrow T\rightarrow 0$
be a projective resolution of $T$ in which every $P_j$ is finitely generated projective over $R$.
Since $A$ is a projective right $R$-module,
\[ \cdots\longrightarrow A\otimes_R P_1 \longrightarrow A\otimes_R P_0 \longrightarrow U\longrightarrow 0 \]
is a projective resolution of $U$ by finitely generated projective $A$-modules.

We next prove that $U$ is self-orthogonal. Since $T$ is Wakamatsu tilting, it is self-orthogonal over $R$, and hence
$T\in {}^{\perp_R}T$. By the hypothesis ${}^{\perp_R}T\subseteq {}^{\perp_R}U$,
we obtain $\Ext_R^i(T,U)=0$ for all $i>0$. Using the derived induction--restriction adjunction from Lemma~\ref{lem:isoms}, we therefore have the following for all $i>0$:
\[ \Ext_A^i(U,U) = \Ext_A^i(A\otimes_R T,U) \cong \Ext_R^i(T,U) =0. \]
Thus $U$ is self-orthogonal as an $A$-module.

It remains to construct a proper $\add_A(U)$-coresolution of $A$. Since $T$ is Wakamatsu tilting, there exists a proper $\add_R(T)$-coresolution
\begin{equation}\label{eq:proper-T}
0\longrightarrow R\longrightarrow T_0\longrightarrow T_1 \longrightarrow T_2\longrightarrow\cdots,
\end{equation}
where each $T_j\in \add_R(T)$ and \eqref{eq:proper-T} remains exact after applying $\Hom_R(-,T)$. For convenience, put $L_0=R$ and, for each $j\geq 0$, let $L_{j+1}=\mathrm{Coker}(L_j\rightarrow T_j)$. Then \eqref{eq:proper-T} decomposes into short exact sequences
\[ 0\longrightarrow L_j\longrightarrow T_j \longrightarrow L_{j+1}\longrightarrow 0. \]
We claim that $L_j\in{}^{\perp_R}T$ for every $j\geq 0$. Indeed, $L_0=R$ is projective, so the assertion is clear for $j=0$. Since $T_j\in\add_R(T)$ and $T$ is self-orthogonal, $\Ext_R^{>0}(T_j,T)=0$. Furthermore, properness of the exact sequence \eqref{eq:proper-T} implies that applying $\Hom_R(-,T)$ to the above short exact sequence is exact at $\Hom_R(L_j,T)$. Equivalently, $\Ext_R^1(L_{j+1},T)=0$. The long exact $\Ext$-sequence associated to the short exact sequence then gives, for every $n\geq 1$,
$\Ext_R^{n+1}(L_{j+1},T) \cong \Ext_R^n(L_j,T)$. An induction on $j$ therefore yields $\Ext_R^i(L_j,T)=0$ for all $i>0$, as claimed.

Hence, by the assumed inclusion ${}^{\perp_R}T\subseteq{}^{\perp_R}U$, it follows that
$\Ext_R^i(L_j,U)=0$ for every $j\geq0$ and every $i>0$. Now apply the exact functor $A\otimes_R-$ to the proper $\add_R(T)$-coresolution \eqref{eq:proper-T}. Since $A\otimes_RR\cong A$, we obtain an exact sequence
\begin{equation}\label{eq:induced-coresolution}
0\longrightarrow A\longrightarrow A\otimes_R T_0 \longrightarrow A\otimes_R T_1 \longrightarrow A\otimes_R T_2 \longrightarrow\cdots.
\end{equation}
Because $T_j\in\add_R(T)$, we have $A\otimes_R T_j\in \add_A(A\otimes_R T) = \add_A(U)$
for every $j$. It only remains to verify that \eqref{eq:induced-coresolution} is $\Hom_A(-,U)$-exact.
Applying $A\otimes_R-$ to the short exact sequence $0\rightarrow L_j\rightarrow T_j \rightarrow L_{j+1}\rightarrow 0$,
we obtain a short exact sequence
\[ 0\longrightarrow A\otimes_R L_j \longrightarrow A\otimes_R T_j \longrightarrow A\otimes_R L_{j+1} \longrightarrow0. \]
Invoking Lemma~\ref{lem:isoms}, we have
$\Ext_A^1(A\otimes_R L_{j+1},U) \cong \Ext_R^1(L_{j+1},U) =0$.  Consequently, applying $\Hom_A(-,U)$ to the preceding short exact sequence yields
\[ 0\rightarrow \Hom_A(A\otimes_R L_{j+1},U) \rightarrow \Hom_A(A\otimes_R T_j,U) \rightarrow \Hom_A(A\otimes_R L_j,U) \rightarrow 0. \]
Thus \eqref{eq:induced-coresolution} remains exact under $\Hom_A(-,U)$. Hence it is a proper $\add_A(U)$-coresolution of $A$. We have shown that $U$ has a projective resolution by finitely generated projective $A$-modules, is self-orthogonal, and admits a proper $\add_A(U)$-coresolution of $A$. Therefore $U$ is a Wakamatsu tilting $A$-module.
\end{proof}

\begin{cor}\label{cor:asc+1}%2.3
Let $\iota: R\to A$ be a Frobenius extension and let $T$ be a Wakamatsu tilting
$R$-module. If $A\otimes_R T\in\add_R(T)$ as an $R$-module,
then $A\otimes_R T$ is Wakamatsu tilting over $A$.
\end{cor}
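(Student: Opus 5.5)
The plan is to reduce Corollary~\ref{cor:asc+1} directly to Theorem~\ref{thm:ascent}. That theorem needs only the inclusion $\perpR T\subseteq\perpR U$, where $U=A\otimes_R T$ is regarded as an $R$-module by restriction. So the whole task is to show that this inclusion follows from the hypothesis $U\in\add_R(T)$.

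First, fix $X\in\perpR T$, so $\Ext_R^i(X,T)=0$ for all $i>0$. Since $\Ext_R^i(X,-)$ is additive and commutes with finite direct sums, we get $\Ext_R^i(X,T^{n})\cong\Ext_R^i(X,T)^{n}=0$ for every $n\ge 1$.

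Next, because $U\in\add_R(T)$, there is an $R$-module $V$ with $U\oplus V\cong T^{n}$ for some $n$. Then $\Ext_R^i(X,U)$ is a direct summand of $\Ext_R^i(X,T^{n})=0$, so it vanishes for all $i>0$. Hence $X\in\perpR U$, which gives $\perpR T\subseteq\perpR U$.

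Theorem~\ref{thm:ascent} then applies verbatim and shows that $U=A\otimes_R T$ is Wakamatsu tilting over $A$. There is no real obstacle here. The only point needing care is that $\add_R(T)$ and the orthogonal class are both taken over $R$, with $U$ restricted along $\iota$. This matches the convention fixed before Lemma~\ref{lem:isoms} and the way $\perpR U$ is used in Theorem~\ref{thm:ascent}.
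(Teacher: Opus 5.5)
Your proposal is correct and matches the paper's proof. You derive ${}^{\perp_R}T\subseteq{}^{\perp_R}(A\otimes_R T)$ from $A\otimes_R T\in\add_R(T)$ and apply Theorem~\ref{thm:ascent}, with your direct-summand argument spelling out the step the paper calls ``readily'' following.
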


\begin{proof}
Let $M$ be an $R$-module in $\perpR T$, that is, $\Ext^i_R(M, T)=0$ for all $i>0$. By the assumption
$U = A\otimes_R T\in\add_R(T)$, it follows readily that $\Ext_R^i(M, U)=0$ for all $i>0$.
Hence, $\perpR T\subseteq \perpR U$, and the assertion follows from the above theorem.
\end{proof}

Following Hirata \cite{Hira}, an $R$-$R$-bimodule $M$ is called centrally projective over $R$ if
${}_R M_R\in\add_{R\text{-}R}({}_R R_R)$, that is, if $M$ is isomorphic, as an $R$-$R$-bimodule, to a direct summand of $R^n$ for some $n\geq 1$. A Frobenius extension $\iota: R\to A$ is called centrally projective if
${}_R A_R$ is centrally projective over $R$. If ${}_R A_R$ is centrally projective, then
$A\otimes_R T \in\add_R(T)$ for every $R$-module $T$.

\begin{cor}\label{cor:asc+2}%2.4
Let $\iota: R\to A$ be a centrally projective Frobenius extension. Then for
every Wakamatsu tilting $R$-module $T$, the induced $A$-module
$A\otimes_R T$ is Wakamatsu tilting.
\end{cor}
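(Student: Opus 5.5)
The plan is to reduce the statement to Corollary~\ref{cor:asc+1}. That corollary already shows that $A\otimes_R T$ is Wakamatsu tilting over $A$ whenever $A\otimes_R T\in\add_R(T)$ as an $R$-module. So it suffices to verify that central projectivity of ${}_RA_R$ forces $A\otimes_R T\in\add_R(T)$ for every left $R$-module $T$. This fact is stated just before the corollary; I will include a short argument for it.

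First, since ${}_RA_R$ is centrally projective, there is an $R$-$R$-bimodule ${}_RA'_R$ and an isomorphism of $R$-$R$-bimodules $A\oplus A'\cong R^n$ for some $n\geq 1$. Equivalently, there are bimodule maps $s:A\to R^n$ and $p:R^n\to A$ with $p\circ s=\mathrm{id}_A$.

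Next, apply the additive functor $-\otimes_R T$, viewed as a functor from $R$-$R$-bimodules to left $R$-modules. This gives left $R$-module maps
\[
s\otimes T: A\otimes_R T\to R^n\otimes_R T,\qquad p\otimes T: R^n\otimes_R T\to A\otimes_R T,
\]
whose composite $(p\otimes T)\circ(s\otimes T)$ is the identity on $A\otimes_R T$. The left $R$-structure on these tensor products comes from the left $R$-action on the bimodules. Because $R^n\otimes_R T\cong T^n$ as left $R$-modules, the restricted module $A\otimes_R T$ is a direct summand of $T^n$, and hence lies in $\add_R(T)$.

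Finally, Corollary~\ref{cor:asc+1} (equivalently, Theorem~\ref{thm:ascent} with the inclusion $\perpR T\subseteq\perpR U$ checked as in that corollary) yields that $A\otimes_R T$ is a Wakamatsu tilting $A$-module.

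No step is a serious obstacle. The one point needing care is that the splitting must hold as bimodules, not only as right $R$-modules, so that the left $R$-module structure of $A\otimes_R T$ obtained by restriction along $\iota$ is the one preserved by the splitting. This is exactly what centrality of the projectivity provides.
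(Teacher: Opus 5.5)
Your proposal is correct and follows the paper's route: the paper obtains this corollary directly from Corollary~\ref{cor:asc+1}, using the remark just before the statement that central projectivity of ${}_RA_R$ forces $A\otimes_R T\in\add_R(T)$. You make that remark explicit by tensoring the bimodule splitting with $T$ to exhibit $A\otimes_R T$ as a direct summand of $T^n$ as a left $R$-module.
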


\begin{rem}\label{rem:asc-BLZ}%2.5
Corollary~\ref{cor:asc+2} agrees with \cite[Proposition~3.2]{BLZ}. A related induction theorem for
generalized tilting modules over a commutative base ring appears in
\cite{FXZ}. The above result emphasizes that the inclusion $\perpR T\subseteq\perpR(A\otimes_R T)$ is essential; central
projectivity is one convenient sufficient condition.
\end{rem}

We refer to \cite{YMiy} for the notion of tilting modules (of finite projective dimension).
Every tilting module is Wakamatsu tilting, whereas a Wakamatsu tilting
module need not be tilting. A longstanding open problem is the
Wakamatsu Tilting Conjecture; see \cite[WT-Conjecture, p.~71]{BR}. It
asserts that a Wakamatsu tilting module of finite projective dimension
is necessarily a tilting module.

In the setting of finite projective dimension, the transfer problem along
a Frobenius extension admits a particularly simple characterization.
Indeed, the finite $\add(T)$-coresolution required in the definition of
a tilting module is automatically preserved by induction, so that the
only additional condition to be checked is the self-orthogonality of
the induced module.

\begin{prop}\label{prop:tilting}%2.6
Let $\iota: R\to A$ be a Frobenius extension, and let $T$ be a tilting
$R$-module of finite projective dimension. For the induced $A$-module
$U=A\otimes_R T$, it is tilting if and only if
$\Ext_R^i(T, U)=0$ for every $i>0$.
\end{prop}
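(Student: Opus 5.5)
The plan is to verify the three defining conditions of a (Miyashita) tilting $A$-module for $U=A\otimes_R T$. These are: finite projective dimension with a resolution by finitely generated projectives, self-orthogonality, and a finite $\add_A(U)$-coresolution of $A$. I will show that all conditions except self-orthogonality transfer automatically, and that self-orthogonality is equivalent to $\Ext_R^i(T,U)=0$ by Lemma~\ref{lem:isoms}.

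\emph{Step 1 (automatic conditions).} Take a finite projective resolution $0\to P_n\to\cdots\to P_0\to T\to 0$ with each $P_j$ finitely generated projective over $R$. Since $A_R$ is projective, $A\otimes_R-$ is exact and preserves finitely generated projectives. Hence $0\to A\otimes_RP_n\to\cdots\to A\otimes_RP_0\to U\to0$ is a finite resolution of $U$ by finitely generated projective $A$-modules, so $\pd_A U\le \pd_R T<\infty$. Likewise, apply $A\otimes_R-$ to a finite coresolution $0\to R\to T_0\to\cdots\to T_m\to0$ with $T_j\in\add_R(T)$. This gives an exact sequence $0\to A\to A\otimes_RT_0\to\cdots\to A\otimes_RT_m\to 0$, and each term $A\otimes_RT_j$ lies in $\add_A(U)$ because $A\otimes_R-$ is additive. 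No properness is required in the finite projective dimension setting, which is the key simplification compared with Theorem~\ref{thm:ascent}. (If one prefers the $n\le 1$ or general Miyashita formulation, the same argument applies verbatim.)

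\emph{Step 2 (self-orthogonality equivalence).} By Lemma~\ref{lem:isoms}, for all $i>0$,
\[ \Ext_A^i(U,U)=\Ext_A^i(A\otimes_RT,U)\cong\Ext_R^i(T,U). \]
So $U$ is self-orthogonal over $A$ if and only if $\Ext_R^i(T,U)=0$ for all $i>0$.

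\emph{Step 3 (conclusion).} Suppose $U$ is tilting. Then it is self-orthogonal, and Step 2 gives the vanishing of $\Ext_R^i(T,U)$. Conversely, suppose the vanishing holds. Step 2 gives self-orthogonality, and Step 1 supplies the remaining conditions, so $U$ is tilting.

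The only point needing care is Step 1: the definition of tilting from \cite{YMiy} must be exactly these three conditions, with the coresolution required only to be exact and finite. If a version of the definition asking for finitely generated terms is used, note that $A$ is finitely generated over $R$, so finite generation is preserved. I do not expect a genuine obstacle here; the substantive input is just the adjunction isomorphism of Lemma~\ref{lem:isoms}.
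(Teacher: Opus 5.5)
Your proposal is correct and follows essentially the same route as the paper: both directions rest on the isomorphism $\Ext_A^i(U,U)\cong\Ext_R^i(T,U)$ from Lemma~\ref{lem:isoms}, and the finite resolution by finitely generated projectives and the finite $\add_A(U)$-coresolution of $A$ come from applying the exact functor $A\otimes_R-$. Your remark that no properness is needed matches the paper's observation that self-orthogonality is the only real obstruction in the finite projective dimension case.
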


\begin{proof}
Suppose first that $U$ is a tilting $A$-module. Then $U$ is
self-orthogonal, and the derived induction--restriction adjunction of
Lemma~\ref{lem:isoms} yields
\[\Ext_R^i(T,U)\cong\Ext_A^i(A\otimes_R T,U)=\Ext_A^i(U,U)=0 \]
for all $i>0$.

Conversely, assume that $\Ext_R^i(T,U)=0$ for any $i>0$.
We verify the three defining conditions for $U$ to be a tilting
$A$-module. Since $T$ has finite projective dimension over $R$, by applying $A\otimes_R-$
to a finite projective resolution of $T$, we then obtain a finite projective resolution of $U$.
Hence, $\pd_A U\leq \pd_R T<\infty$.
Next, by Lemma~\ref{lem:isoms}, we have
$\Ext_A^i(U,U)=\Ext_A^i(A\otimes_R T,U)\cong\Ext_R^i(T,U)=0$
for all $i>0$, so $U$ is self-orthogonal.
Finally, since $T$ is a tilting $R$-module, there exists an exact sequence
\[0\longrightarrow R\longrightarrow T_0\longrightarrow \cdots\longrightarrow
T_n\longrightarrow 0,\]
where $T_i\in \add_R(T)$.
Analogous to the above, applying $A\otimes_R-$ to this exact sequence,
we can show that $A$ admits a finite $\add_A(U)$-coresolution.
Hence, $U$ is a tilting $A$-module.
\end{proof}

\begin{rem}\label{rem:miyachi}%2.7
The preceding result is closely related to Miyachi's extension theorem for
tilting complexes \cite{JMiy}. Since $A$ is a projective right $R$-module,
$A\otimes_R^{\mathbf L}T\cong A\otimes_R T$.
Thus, for a tilting $R$-module $T$ of finite projective dimension,
Miyachi's condition $\Hom_{\mathbf D^b(R)}\bigl(T,A\otimes_R T[i]\bigr)=0$
for all $i\neq0$ is equivalent to $\Ext_R^i(T,A\otimes_R T)=0$ for all $i>0$,
since the negative-degree Hom-groups vanish for modules concentrated in
degree zero. Thus Proposition~\ref{prop:tilting} may be viewed as the module-theoretic
specialization of Miyachi's criterion to Frobenius extensions, together
with the converse supplied by the derived Frobenius adjunction.
\end{rem}

It is natural to ask whether Wakamatsu tilting modules also descend along
Frobenius extensions. In contrast with the ascent problem, we need splitness of the
extension, which allows one to recover $T$ as a direct summand of the restricted
induced module. Recall that an extension $\iota: R\to A$ is called split if the
inclusion admits an $R$-$R$-bimodule retraction; equivalently,
${}_RA_R\cong{}_RR_R\oplus{}_RC_R$ for some $R$-$R$-bimodule $C$.
%However, splitness alone does not guarantee that a proper $\add_A(A\otimes_R T)$-coresolution descends to a proper $\add_R(T)$-coresolution after restriction of scalars. We therefore impose a mild additional condition ensuring the required compatibility.

\begin{thm}\label{thm:desc}%2.8
Let $\iota:R\to A$ be a split Frobenius extension and let $T$ be an
$R$-module. Assume that $U=A\otimes_R T\in\add_R(T)$ as an $R$-module.
If $U$ is a Wakamatsu tilting $A$-module, then $T$ is a Wakamatsu tilting $R$-module.
\end{thm}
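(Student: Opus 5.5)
We check the three defining conditions for $T$ in turn, using throughout that splitness makes $T$ an $R$-direct summand of $U$. Indeed, ${}_RA_R\cong {}_RR_R\oplus{}_RC_R$ gives ${}_RU\cong T\oplus (C\otimes_R T)$. Together with the hypothesis $U\in\add_R(T)$, this yields $\add_R(T)=\add_R(U)$ as subcategories of $\Mod(R)$.

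\emph{Finiteness.} Since $A$ is finitely generated projective as a left $R$-module, restriction sends finitely generated projective $A$-modules to finitely generated projective $R$-modules. Hence a resolution of $U$ by finitely generated projective $A$-modules restricts to such a resolution of ${}_RU$ over $R$. Adding a resolution of $C\otimes_R T$ does not help directly, so instead we argue as follows. A direct summand of a module admitting a resolution by finitely generated projectives again admits one: the class of modules of type $FP_\infty$ is closed under direct summands. Hence $T$ has a resolution by finitely generated projective $R$-modules.

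\emph{Self-orthogonality.} By Lemma~\ref{lem:isoms}, $\Ext_R^i(T,U)\cong\Ext_A^i(A\otimes_R T,U)=\Ext_A^i(U,U)=0$ for $i>0$. Since $T$ is a summand of ${}_RU$, we get $\Ext_R^i(T,T)=0$.

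\emph{Proper coresolution.} Take a proper $\add_A(U)$-coresolution
\[0\to A\to U_0\to U_1\to\cdots\]
and restrict it to $R$. Each ${}_RU_j$ lies in $\add_R(U)=\add_R(T)$. Splitness gives ${}_RA\cong R\oplus {}_RC$, and the restricted sequence starts at ${}_RA$, not at $R$. We fix this by pushing out. Let $L_j$ be the cosyzygies over $A$, so $L_0=A$. Since $\add_R(T)=\add_R(U)$, a sequence of $R$-modules is $\Hom_R(-,T)$-exact iff it is $\Hom_R(-,U)$-exact. By the $(\Res,\Ind)$ adjunction, $\Hom_R(X,U)\cong\Hom_R(X,A\otimes_RT)\cong \Hom_A(X,A\otimes_R(A\otimes_RT))$ for $A$-modules $X$. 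That route is awkward, so I would instead use the Ext-characterization from the proof of Theorem~\ref{thm:ascent}. Properness amounts to $\Ext^1_A(L_{j+1},U)=0$ for all $j$, and in fact $L_j\in{}^{\perp_A}U$ by that same induction. We need $\Ext_R^1(L_{j+1},T)=0$. By Lemma~\ref{lem:isoms}, $\Ext_R^i(L_{j+1},T)\cong\Ext_A^i(L_{j+1},A\otimes_RT)=\Ext_A^i(L_{j+1},U)=0$. So the restricted sequence is $\Hom_R(-,T)$-exact, i.e.\ a proper $\add_R(T)$-coresolution of ${}_RA$.

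Finally, pass from ${}_RA$ to its summand $R$. Write ${}_RA=R\oplus C'$ and form the pushout of $0\to {}_RA\to U_0$ along the projection ${}_RA\to R$. This gives $0\to R\to U_0/C'\to L_1\to 0$. The term $U_0/C'$ need not lie in $\add(T)$, so this is the main obstacle. I would instead use a direct sum trick: $C'$ is itself a direct summand of ${}_RA\in{}^{\perp_R}T$. Append to $0\to R\to {}_RU_0$ the sequence $0\to 0\to {}_RU_0 \to {}_RU_0 \to 0$? That fails too. The cleaner route is the standard fact that for a self-orthogonal $T$, the class of modules admitting a proper $\add(T)$-coresolution, namely $\mathcal X_T=\{M\in{}^{\perp}T : M \text{ has a proper } \add(T)\text{-coresolution with cosyzygies in } {}^{\perp}T\}$, is closed under direct summands when $\add(T)$ is closed under countable constructions. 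Since that is not available in general, I would settle on the Eilenberg-swindle-free argument that works because $C'$ also has such a coresolution. By the symmetric splitting, $A\otimes_R T$ itself yields one: the coresolution of $A$ restricted is $R\oplus C'\to\cdots$. Composing the inclusion $R\to{}_RA\to {}_RU_0$ gives a monomorphism $R\to U_0$ whose cokernel is $C'\oplus L_1$, which has a proper coresolution obtained from those of $C'$ and $L_1$. Iterating this construction with a bookkeeping of summands should give the required proper $\add_R(T)$-coresolution of $R$. Verifying this last step, closure of $\mathcal X_T$ under the summand $R$ of ${}_RA$, is where I expect the real work to lie.
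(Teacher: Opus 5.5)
Your splitting argument, the finiteness and self-orthogonality steps, and your check that the restricted sequence $0\to{}_RA\to U_0\to U_1\to\cdots$ is a proper $\add_R(T)$-coresolution of ${}_RA$ are all correct and match the paper. Your Ext-level check via $\Ext_R^i(L_{j+1},T)\cong\Ext_A^i(L_{j+1},U)$ is a harmless variant of the paper's natural isomorphism $\Hom_A(X,U)\cong\Hom_A(X,\Hom_R(A,T))\cong\Hom_R(X,T)$.

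The gap is the last step, passing from ${}_RA$ to its direct summand $R$. You leave it open, and your sketch for it does not work. The cokernel $N$ of the composite $R\to{}_RA\to U_0$ is not $C'\oplus L_1$. It only sits in an exact sequence $0\to C'\to N\to L_1\to 0$, which need not split. More seriously, $C'$ is just the complementary summand of ${}_RA$. Producing a proper coresolution of $C'$ is the same problem as for $R$, so the proposed ``bookkeeping'' is circular.

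The missing ingredient is exactly the fact you discarded: the class of modules admitting a proper $\add_R(T)$-coresolution is closed under direct summands. This holds over an arbitrary ring and needs no countable constructions. It is \cite[Lemma~2.2]{Wak04}, which the paper invokes at this point, and it has a short proof:
\begin{enumerate}
\item[(a)] Let $M=M_1\oplus M_2$ have a proper coresolution beginning with $0\to M\xrightarrow{f}T_0\to M'\to 0$. Properness says $f$ is a left $\add_R(T)$-approximation.
\item[(b)] Hence each restriction $f_k=f|_{M_k}$ is also a left approximation: extend a map $M_k\to T'$ along the projection $M\to M_k$ and factor it through $f$. Consequently $f_1\oplus f_2\colon M\to T_0\oplus T_0$ is a left approximation too.
\item[(c)] Two monic left approximations $f\colon M\to X$ and $g\colon M\to Y$ factor through each other. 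So the map $M\to X\oplus Y$ with components $f,g$ can be changed by automorphisms of $X\oplus Y$ into one with components $f,0$, or into one with components $0,g$. This gives $\mathrm{Coker} f\oplus Y\cong X\oplus\mathrm{Coker} g$.
\item[(d)] Here it yields $M'\oplus T_0^2\cong T_0\oplus N_1\oplus N_2$, where $N_k=\mathrm{Coker} f_k$. Thus $0\to M_k\to T_0\to N_k\to0$ is $\Hom_R(-,T)$-exact, and $N_k$ is a direct summand of $M'\oplus T_0^2$, which again has a proper coresolution.
\item[(e)] Iterating produces a proper $\add_R(T)$-coresolution of $M_k$.
\end{enumerate}
Applied to ${}_RA\cong R\oplus C'$, this completes your argument.
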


\begin{proof}
Since the extension is split, there is an $R$-$R$-bimodule $C$ such that
${}_R A_R\cong {}_R R_R\oplus {}_R C_R$.
Applying $-\otimes_R T$ to this decomposition gives an isomorphism of
$R$-modules
$U = A\otimes_R T \cong T\oplus(C\otimes_R T)$; in particular, this implies
$T\in\add_R(U)$. Together with the assumption $U\in\add_R(T)$, it follows that
$\add_R(U)=\add_R(T)$.

We first verify the finiteness condition in the definition of a Wakamatsu
tilting module. Since $U$ is Wakamatsu tilting over $A$, it admits a
projective resolution
\[
\cdots\longrightarrow P_1\longrightarrow P_0
\longrightarrow U\longrightarrow 0
\]
in which every $P_i$ is finitely generated projective over $A$. Since
$A$ is finitely generated projective $R$-module, restriction of scalars sends
finitely generated projective $A$-modules to finitely generated projective
$R$-modules. Hence, as a restricted $R$-module, $U$ admits a projective resolution by finitely
generated projective $R$-modules. Since the class of modules admitting such a
resolution is closed under direct summands, and $U = A\otimes_R T \cong T\oplus(C\otimes_R T)$
as $R$-modules, $T$ also admits a projective resolution by finitely
generated projective $R$-modules.

We next prove that $T$ is self-orthogonal. By the derived
induction--restriction adjunction, we have
$\Ext_R^i(T,U)\cong \Ext_A^i(A\otimes_R T,U)=\Ext_A^i(U,U)=0$
for all $i>0$. Since $T$ is a direct summand of ${}_R U$, the group
$\Ext_R^i(T,T)$ is a direct summand of $\Ext_R^i(T,U)$. Therefore
$\Ext_R^i(T,T)=0$ for all $i>0$.

It remains to construct a proper $\add_R(T)$-coresolution of $R$.
Since $U$ is Wakamatsu tilting over $A$, there exists a proper
$\add_A(U)$-coresolution
\begin{equation}\label{eq:proper-A}
0\longrightarrow A\longrightarrow U_0\longrightarrow U_1
\longrightarrow U_2\longrightarrow\cdots,
\end{equation}
where $U_j\in\add_A(U)$ and it remains exact after applying $\Hom_A(-,U)$.
Restricting scalars along $\iota$ preserves exactness, and then by $U_j\in\add_A(U)$
we imply that $U_j\in\add_R(U)=\add_R(T)$ as restricted $R$-modules.

We claim that,  as a restricted exact sequence of $R$-modules, (\ref{eq:proper-A})
remains exact after applying $\Hom_R(-, T)$. Indeed, since $\iota: R\to A$ is a Frobenius extension,
$U=A\otimes_R T \cong \Hom_R(A,T)$ as left $A$-modules, and therefore, for every $A$-module $X$ we have
natural isomorphisms
\[\Hom_A(X,U)\cong \Hom_A\bigl(X,\Hom_R(A,T)\bigr)\cong\Hom_R(X, T).\]
Then, applying $\Hom_A(-,U)$ to the above proper $\add_A(U)$-coresolution  (\ref{eq:proper-A}), we
have an exact sequence
\[ \cdots \longrightarrow \Hom_R(U_1, T)\longrightarrow\Hom_R(U_0, T)\longrightarrow\Hom_R(A, T)\longrightarrow 0,\]
which implies that the restriction of (\ref{eq:proper-A}) is a proper $\add_R(T)$-coresolution of the $R$-module $A$, as needed.

Since $T$ is self-orthogonal, it follows from \cite[Lemma~2.2]{Wak04} that the class of modules admitting a proper
$\add_R(T)$-coresolution is closed under direct summands. Hence, as a direct
summand of the restricted $R$-module $A$, $R$ admits a proper $\add_R(T)$-coresolution.
Therefore $T$ is a Wakamatsu tilting $R$-module.
\end{proof}

As an immediate consequence of Corollary~\ref{cor:asc+2} and
Theorem~\ref{thm:desc}, we obtain the following ascent and descent
result.

\begin{cor}\label{cor:asc-desc}%2.9
Let $\iota: R\to A$ be a split, centrally projective Frobenius extension,
and let $T$ be an $R$-module. Then
$T$ is Wakamatsu tilting over $R$ if and only if
$A\otimes_R T$ is Wakamatsu tilting over $A$.
\end{cor}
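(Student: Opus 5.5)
The plan is to derive Corollary~\ref{cor:asc-desc} by combining the two directions already established, using central projectivity to supply the additive hypothesis $A\otimes_R T\in\add_R(T)$ needed on both sides. The key preliminary observation is the one recorded just before Corollary~\ref{cor:asc+2}. If ${}_RA_R$ is a direct summand of ${}_RR^n_R$ as a bimodule, then applying the additive functor $-\otimes_R T$ exhibits $A\otimes_R T$ as a direct summand of $R^n\otimes_R T\cong T^n$ as left $R$-modules. Hence $A\otimes_R T\in\add_R(T)$ for every $R$-module $T$, with no assumption on $T$. I would state this explicitly first, since it is exactly what makes both implications apply to an arbitrary $T$.

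For the forward implication, assume $T$ is Wakamatsu tilting over $R$. Then Corollary~\ref{cor:asc+2} applies directly and gives that $A\otimes_R T$ is Wakamatsu tilting over $A$. Equivalently, one can go through Corollary~\ref{cor:asc+1} using the observation above. Splitness is not needed in this direction.

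For the converse, assume $U=A\otimes_R T$ is Wakamatsu tilting over $A$. The extension is split by hypothesis, and $U\in\add_R(T)$ holds by the preliminary observation. So all hypotheses of Theorem~\ref{thm:desc} are met, and it yields that $T$ is Wakamatsu tilting over $R$.

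There is essentially no obstacle, since the corollary is a formal combination of earlier results. The only point requiring care is the bimodule bookkeeping in the preliminary step. One must check that the summand decomposition ${}_RA_R\oplus X\cong{}_RR^n_R$ is one of $R$-$R$-bimodules. Tensoring over the right $R$-action with $T$ then gives an isomorphism of left $R$-modules $(A\otimes_RT)\oplus(X\otimes_RT)\cong T^n$. This makes $\add_R(T)$-membership hold for the restricted module, which is the form required in Theorem~\ref{thm:desc}.
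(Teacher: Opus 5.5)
Your proposal is correct and follows the paper's proof exactly. The forward direction is Corollary~\ref{cor:asc+2}, and the converse is Theorem~\ref{thm:desc}, with the hypothesis $A\otimes_R T\in\add_R(T)$ obtained by tensoring the bimodule summand decomposition of ${}_RA_R$ with $T$; your explicit bookkeeping for that step simply spells out what the paper calls ``follows readily''.
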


\begin{proof}
It suffice to prove the ``if'' part. Suppose that $A\otimes_R T$ is Wakamatsu tilting over $A$.
Since ${}_R A_R$ is centrally projective,
${}_R A_R\in\add({}_R R_R)$. Then, it follows readily that $A\otimes_R T\in\add_R(T)$.
Moreover, since the Frobenius extension $\iota: R\to A$ is split,
Theorem~\ref{thm:desc} applies and shows that $T$ is Wakamatsu
tilting over $R$.
\end{proof}
\medskip

\section{Endomorphism rings}
\label{sec:endo}

\noindent We now turn to the behavior of endomorphism rings under Frobenius
extensions. Let $\iota: R\to A$ be a ring extension and let $T$ be an $R$-module.
Throughout this section, set $U=A\otimes_R T$, $S=\End_R(T)$ and $B=\End_A(U)$.
Endomorphism rings are written according to
the right-action convention: for $s_1,s_2\in S=\End_R(T)$,
$t(s_1s_2)=(ts_1)s_2$, or equivalently, $s_1s_2=s_2\circ s_1$
in the usual notation for composition of maps $T\stackrel{s_1}\rightarrow T \stackrel{s_2}\rightarrow T$. Thus $T$ is naturally
an $R$-$S$-bimodule via $ts=s(t)$. Analogously, we regard $U$ as an $A$-$B$-bimodule.
The assignment $\rho: S\rightarrow B$ given by $\rho(s)=1_A\otimes s$
is a ring homomorphism; explicitly,
$(a\otimes t)\rho(s)=a\otimes ts$ for any
$a\in A$, $t\in T$ and $s\in S$.

We first record the corresponding evaluation and duality isomorphisms, which are
immediate for $M=T^n$.  The two constructions are compatible with finite direct sums and direct summands.
Hence, such isomorphisms hold for any $R$-module $M\in\add_R(T)$.

\begin{lem}\label{lem:yoneda}%3.1
Let $M\in\add_R(T)$. Then the following hold:
\begin{enumerate}
\item[(i)] the evaluation map
$\varepsilon_M: T\otimes_S\Hom_R(T,M)\rightarrow M$ given by
$t\otimes f\mapsto f(t)$ is an isomorphism;

\item[(ii)] the natural map $\delta_M: \Hom_R(M,T)\rightarrow \Hom_S(\Hom_R(T,M),S)$
defined by $\delta_M(g)(f)=g\circ f$ is an isomorphism.
\end{enumerate}
Both isomorphisms are natural in $M$ and are compatible with additional
module structures induced by endomorphisms of $M$.
\end{lem}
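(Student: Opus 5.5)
The plan is to verify both isomorphisms first for $M=T$ and then for $M=T^n$, and finally to pass to direct summands. The only tools needed are that both constructions are additive functors of $M$ and that the maps $\varepsilon$ and $\delta$ are natural transformations between them.

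For (i), the case $M=T$ gives $\Hom_R(T,T)=S$, and $\varepsilon_T$ is the canonical map $T\otimes_S S\to T$, $t\otimes s\mapsto s(t)=ts$. This is the standard isomorphism. For (ii) with $M=T$, the map $\delta_T:S\to\Hom_S(S_S,S_S)$ sends $g$ to left multiplication by $g$. Here I have to check conventions: under the right-action convention, $g\circ f$ is the product $fg$ in $S$. So $\delta_T(g)$ is $f\mapsto fg$, which is left multiplication by $g$ if we write $\Hom_S(S,S)$ for maps of right $S$-modules. That map is an isomorphism with inverse $\phi\mapsto\phi(1)$. I expect this bookkeeping of which side $S$ acts on to be the only real care point. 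One must confirm that $\Hom_R(T,M)$ is a left $S$-module, via precomposition, so that $T\otimes_S\Hom_R(T,M)$ makes sense. One must also confirm that $\Hom_S(\Hom_R(T,M),S)$ is taken over left $S$-modules, so that $\delta_M(g)$ is $S$-linear: $\delta_M(g)(sf)=g\circ f\circ s=s\,(g\circ f)$ in the right-action notation.

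Next, for $M=T^n$ I would use that $\Hom_R(T,-)$, $T\otimes_S-$, $\Hom_R(-,T)$ and $\Hom_S(-,S)$ all commute with finite direct sums. Naturality of $\varepsilon$ and $\delta$ in $M$ then identifies $\varepsilon_{T^n}$ with $\varepsilon_T^{\oplus n}$ and $\delta_{T^n}$ with $\delta_T^{\oplus n}$, so both are isomorphisms. For a summand $M$ of $T^n$ with idempotent $e\in\End_R(T^n)$, naturality shows that $\varepsilon_M$ and $\delta_M$ are retracts of isomorphisms, hence isomorphisms. Equivalently, the class of $M$ for which a natural transformation between additive functors is invertible is closed under summands.

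Finally, compatibility with extra structure comes directly from naturality. If $M$ carries a right action of a ring $E$ through $\End_R(M)$, each endomorphism $\varphi$ of $M$ satisfies $\varphi\circ\varepsilon_M=\varepsilon_M\circ(T\otimes\Hom_R(T,\varphi))$, and the analogous identity holds for $\delta$. So both isomorphisms respect the induced $E$-module structures. This will later be applied with $M=U$ and $E=B$.
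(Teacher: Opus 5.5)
Your proposal is correct and takes the same approach as the paper: verify the isomorphisms for $M=T$ (hence $M=T^n$), then pass to direct summands using additivity and naturality of $\varepsilon$ and $\delta$. The paper only asserts this in one line, and you supply the details, with one labelling slip: $\Hom_R(T,T)$ with the precomposition action is the left regular module ${}_SS$, so $\delta_T(g)\colon f\mapsto fg$ is right multiplication by $g$, an endomorphism of ${}_SS$ rather than of $S_S$. This agrees with your later correct remark that $\Hom_S$ is taken over left $S$-modules, and it does not affect the inverse $\phi\mapsto\phi(1)$.
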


When $T$ is Wakamatsu tilting, the bimodule ${}_RT_S$ also reflects the
usual two-sided symmetry of Wakamatsu tilting modules. In particular,
$T_S$ is a Wakamatsu tilting right $S$-module and the canonical map
$R\rightarrow \End_{S^{\op}}(T)$ is an isomorphism; see \cite[Corollary~3.2]{Wak04}.

Classical endomorphism-ring results for Frobenius extensions were developed
by Morita; see \cite{Mor65, Mor67}. Motivated by this theory, we establish
the following additive criterion adapted to the present setting.
Its hypothesis involves only the additive relation between $T$ and its induced module $U=A\otimes_RT$;
no assumption of tilting, self-orthogonality, or finiteness of projective dimension
is required.

\begin{thm}\label{thm:end-extension}%3.2
Let $\iota: R\to A$ be a Frobenius extension and let $T$ be an
$R$-module. Assume that $U\in\add_R(T)$ as an $R$-module. Then
the natural homomorphism $\rho:S\to B$ is a Frobenius extension,
and moreover,  the canonical evaluation map
$T\otimes_S B\to U$ defined by
$t\otimes b\mapsto (1\otimes t)b$
is an isomorphism of $R$-$B$-bimodules.
\end{thm}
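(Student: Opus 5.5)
We identify $B$ with $\Hom_R(T,U)$ and then read off both the bimodule isomorphism and the Frobenius structure from Lemma~\ref{lem:yoneda}.

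\textbf{Step 1: $B$ as a right $S$-module.} By the Frobenius adjunction $(\Ind,\Res)$ there is an isomorphism
\[ \Phi\colon B=\Hom_A(A\otimes_R T,U)\longrightarrow \Hom_R(T,U), \qquad b\mapsto \bigl(t\mapsto (1\otimes t)b\bigr). \]
We check that $\Phi$ is compatible with precomposition by $S$: $\Phi(\rho(s)b)=\Phi(b)\circ s$. This makes $\Phi$ an isomorphism of right $S$-modules, where $\Hom_R(T,U)$ carries its $S$-action through $T$. It is also left $S$-linear if we define $s\cdot f = f\circ s$.

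Since $U\in\add_R(T)$, the module $\Hom_R(T,U)$ is a direct summand of $\Hom_R(T,T^n)=S^n$ as a left $S$-module. So $B$ is finitely generated projective as a left $S$-module via $\rho$; here the left $S$-action $s\cdot b=\rho(s)b$ corresponds to precomposition.

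\textbf{Step 2: the evaluation map.} The map $T\otimes_S B\to U$ factors as
\[ T\otimes_S B \xrightarrow{\;1\otimes\Phi\;} T\otimes_S\Hom_R(T,U) \xrightarrow{\;\varepsilon_U\;} U. \]
The first map is an isomorphism, and $\varepsilon_U$ is an isomorphism by Lemma~\ref{lem:yoneda}(i). The composite is left $R$-linear. It is right $B$-linear because $(1\otimes t)b b'$ is computed compatibly on both sides. This gives the second assertion.

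\textbf{Step 3: the Frobenius isomorphism.} We must show $B\cong\Hom_S({}_SB,S)$ as $B$-$S$-bimodules. Lemma~\ref{lem:yoneda}(ii) applied to $M=U$ gives
\[ \Hom_R(U,T)\cong\Hom_S(\Hom_R(T,U),S)\cong\Hom_S(B,S). \]
On the other side, the Frobenius isomorphism $U\cong\Hom_R(A,T)$ of left $A$-modules together with the adjunction $(\Res,\Ind)$ gives
\[ B=\Hom_A(U,U)\cong\Hom_A\bigl(U,\Hom_R(A,T)\bigr)\cong\Hom_R(U,T). \]
Composing the two yields $B\cong\Hom_S(B,S)$.

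\textbf{Main obstacle: bimodule compatibility.} The isomorphism of Step~3 must be checked to be $B$-$S$-bilinear, using the naturality statements in Lemma~\ref{lem:yoneda}.
\begin{itemize}
\item Right $S$-action: this should hold automatically, since all maps are natural in $T$ with respect to postcomposition by $S$.
\item Left $B$-action: the maps should be natural in $U$ with respect to endomorphisms of $U$. The subtle point is that $\Hom_R(A,T)\cong A\otimes_RT$ must be used as an isomorphism of $A$-modules, so that it intertwines $B$ acting on $U$.
\end{itemize}
I would verify this by tracking an explicit element $b\mapsto \pi\circ b$, where $\pi\colon U\to T$ is the counit $a\otimes t\mapsto E(a)t$ coming from the Frobenius form $E\colon A\to R$. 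Under this description the left $B$-action is precomposition, which is transparent. The right $S$-action is postcomposition by $s$ on $T$, and it agrees with $\rho(s)$ because $\pi\circ\rho(s)=s\circ\pi$.

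With the bilinearity established, $B$ is finitely generated projective over $S$ and $B\cong\Hom_S(B,S)$ as $B$-$S$-bimodules. Hence $\rho$ is a Frobenius extension.
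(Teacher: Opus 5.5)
Your proposal is correct and follows the paper's proof essentially step for step. You identify $B\cong\Hom_R(T,U)$ via the $(\Ind,\Res)$ adjunction, which gives finite generation and projectivity over $S$ and, with Lemma~\ref{lem:yoneda}(i), the evaluation isomorphism; you then combine $B\cong\Hom_R(U,T)$ (from $U\cong\Hom_R(A,T)$ and the $(\Res,\Ind)$ adjunction) with Lemma~\ref{lem:yoneda}(ii) to get $B\cong\Hom_S(B,S)$.

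Your explicit description $b\mapsto\pi\circ b$, with $\pi(a\otimes t)=E(a)t$ and the identity $\pi\circ\rho(s)=s\circ\pi$, supplies the bimodule-compatibility check that the paper only asserts. There is one slip, in Step~1: $\Phi$ is an isomorphism of \emph{left} $S$-modules, since under the paper's right-action convention precomposition is a left action. You state this correctly in the very next sentence.
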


\begin{proof}
By the adjunction $\Hom_A(A\otimes_R T,U)\cong\Hom_R(T,U)$, it gives a natural isomorphism
of $S$-$B$-bimodules $B\cong \Hom_R(T,U)$. %{eq:B-as-hom}
The left $S$-action on $\Hom_R(T,U)$ is given by
$(sf)(t)=f(ts)$, whereas the right $B$-action is given by
$(fb)(t)=f(t)b$.
By the assumption $U\in\add_R(T)$, we have
$\Hom_R(T,U)\in S\text{-}\mathrm{proj}$,
where $S\text{-}\mathrm{proj}$ denotes the category of finitely generated
projective $S$-modules. Therefore $B$ is a finitely generated projective $S$-module.
%There is a standard equivalence $\Hom_R(T,-): \add_R(T)\xrightarrow{\ \sim\ }S\text{-}\mathrm{proj}$, with quasi-inverse $T\otimes_S-$.

Since $\iota: R\to A$ is a Frobenius extension, we obtain
\[ B =\Hom_A(U, A\otimes_R T)\cong \Hom_A(U, \Hom_R(A, T))\cong \Hom_R(U,T). \]
Taking the natural module structures into account, this is an
isomorphism of $B$-$S$-bimodules $B\cong \Hom_R(U,T)$. %{eq:B-other-hom}
Since $U\in\add_R(T)$, Lemma~\ref{lem:yoneda}(ii) gives an isomorphism
\[ \Hom_R(U,T)\cong \Hom_S(\Hom_R(T,U),S).\]
Moreover, this is an isomorphism of $B$-$S$-bimodules, where the
left $B$-action on the right-hand side is induced from the right
$B$-action on $\Hom_R(T,U)$.
Combining with %{eq:B-as-hom} and {eq:B-other-hom}
the isomorphisms $B\cong \Hom_R(T,U)$ of $S$-$B$-bimodules, and $B\cong \Hom_R(U,T)$
of $B$-$S$-bimodules, we obtain an isomorphism of $B$-$S$-bimodules
$B\cong \Hom_S(B, S)$. Since $B$ is finitely generated projective as a left $S$-module, this is precisely the
Frobenius condition for the extension $\rho:S\to B$.

Using the isomorphism $B\cong \Hom_R(T,U)$ %{eq:B-as-hom}
and the evaluation isomorphism of Lemma~\ref{lem:yoneda}(i), we obtain
\[
T\otimes_S B
\cong
T\otimes_S\Hom_R(T,U)
\xrightarrow{\ \sim\ }U.
\]
Under the adjunction identification
$B\cong\Hom_R(T,U)$, this map is explicitly given by
$t\otimes b\mapsto(1\otimes t)b$.
It is clearly compatible with the left $R$-action and the right
$B$-action. Hence it is an isomorphism of $R$-$B$-bimodules.
\end{proof}

The second assertion above admits a coinduction interpretation.

\begin{cor}\label{cor:coind}%3.3
Under the hypotheses of Theorem~\ref{thm:end-extension}, there is a
natural isomorphism of $R$-$B$-bimodules
$U \cong \Hom_{S^{\op}}(B, T)$.
\end{cor}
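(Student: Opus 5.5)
We will deduce Corollary~\ref{cor:coind} directly from Theorem~\ref{thm:end-extension}, using the Frobenius property of $\rho:S\to B$ in its right-handed form.

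First, since $\rho:S\to B$ is a Frobenius extension, the equivalent formulation recalled in Section~\ref{sec:Waka} applies. Thus $B$ is finitely generated projective as a right $S$-module, and $B\cong\Hom_{S^{\op}}(B,S)$ as $S$-$B$-bimodules.

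Next, we replace $B$ in $T\otimes_S B$ by its right dual. By Theorem~\ref{thm:end-extension}, $U\cong T\otimes_S B$ as $R$-$B$-bimodules. This gives
\[
U\cong T\otimes_S B\cong T\otimes_S \Hom_{S^{\op}}(B,S).
\]
Here we identify right $S$-modules with left $S^{\op}$-modules, as in the paper's convention.

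Then we apply the standard evaluation map
\[
\theta: T\otimes_S\Hom_{S^{\op}}(B,S)\to \Hom_{S^{\op}}(B,T),\qquad t\otimes\phi\mapsto \bigl(b\mapsto t\,\phi(b)\bigr).
\]
This map is natural in the right $S$-module $B_S$. It is an isomorphism when $B_S=S$, and it is additive. Hence it is an isomorphism for every finitely generated projective right $S$-module, and in particular for $B_S$.

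Finally, we check that $\theta$ respects the $R$-$B$-bimodule structures. The left $R$-action acts through $T$ on both sides. The right $B$-action is induced by left multiplication of $B$ on the argument $b$, and it corresponds to the right $B$-structure of $\Hom_{S^{\op}}(B,S)$. Composing the three isomorphisms gives $U\cong\Hom_{S^{\op}}(B,T)$ as $R$-$B$-bimodules.

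The main point to verify is that the bimodule isomorphism $B\cong\Hom_{S^{\op}}(B,S)$ is genuinely the right-sided Frobenius datum. If one prefers to avoid quoting that equivalence, one can instead argue as in the proof of Theorem~\ref{thm:end-extension}. In that case we would compute $\Hom_{S^{\op}}(B,T)$ with $B_S\cong\Hom_R(U,T)$, and use Lemma~\ref{lem:yoneda} together with $U\in\add_R(T)$ to identify the result with $U$. We will take the first route and include this second computation only as a consistency check on the actions.
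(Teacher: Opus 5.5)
Your proposal is correct and follows the paper's argument: both combine $U\cong T\otimes_S B$ from Theorem~\ref{thm:end-extension} with the isomorphism $T\otimes_S B\cong\Hom_{S^{\op}}(B,T)$ that comes from the Frobenius extension $\rho:S\to B$. The paper quotes this induction--coinduction isomorphism for the right $S$-module $T$ directly, whereas you derive it from the right-handed datum $B\cong\Hom_{S^{\op}}(B,S)$ via the evaluation map (with correct checks of the $R$- and $B$-actions), which is a valid unpacking of the same fact.
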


\begin{proof}
Since $\rho: S\to B$ is a Frobenius extension, for the right $S$-module $T$, we have
$T\otimes_S B \cong \Hom_{S^{\op}}(B, T)$.
The left $R$- and right $B$-actions are preserved by naturality.
By Theorem~\ref{thm:end-extension}, $U \cong T\otimes_S B$. Hence, the required isomorphism
$U \cong \Hom_{S^{\op}}(B, T)$ holds.
\end{proof}

If $\iota: R\to A$ is centrally projective, then ${}_R A_R\in\add_{R\text{-}R}({}_R R_R)$,
and hence $A\otimes_R T\in\add_R(T)$ for every $R$-module $T$. Therefore
Theorem~\ref{thm:end-extension} applies to every $T$.
In particular, for a Wakamatsu tilting module $T$,
this recovers the endomorphism ring conclusion of \cite[Theorem~3.5]{BLZ},
while showing that neither the Wakamatsu tilting hypothesis nor the additional assumption of finite
projective dimension is needed for the endomorphism ring extension itself.

\begin{cor}\label{cor:end-central}%3.4
Let $\iota: R\to A$ be a centrally projective Frobenius extension.
Then, for every $R$-module $T$, the natural homomorphism of endomorphism rings
$\rho: S\to B$
is a Frobenius extension. Moreover, there is an isomorphism of $R$-$B$-bimodules
$T\otimes_S B\cong A\otimes_R T$.
\end{cor}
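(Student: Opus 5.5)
This is a direct reduction to Theorem~\ref{thm:end-extension}. The only hypothesis there is $U=A\otimes_R T\in\add_R(T)$ as an $R$-module, so it suffices to show that central projectivity of ${}_RA_R$ gives this for every $R$-module $T$. Once that holds, both conclusions, that $\rho:S\to B$ is a Frobenius extension and that $T\otimes_S B\cong A\otimes_R T$ as $R$-$B$-bimodules, follow verbatim from Theorem~\ref{thm:end-extension}.

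To prove $A\otimes_R T\in\add_R(T)$, start from ${}_RA_R\in\add_{R\text{-}R}({}_RR_R)$. This gives an $R$-$R$-bimodule $C'$ and an isomorphism of bimodules ${}_RA_R\oplus C'\cong R^n$ for some $n\ge 1$. Applying $-\otimes_R T$ preserves finite direct sums, and the left $R$-module structure on each summand comes from the left bimodule action. Hence we get an isomorphism of left $R$-modules
\[
(A\otimes_R T)\oplus (C'\otimes_R T)\cong R^n\otimes_R T\cong T^n .
\]
So $A\otimes_R T$ is a direct summand of $T^n$, that is, $A\otimes_R T\in\add_R(T)$.

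The only point needing care is that the left $R$-structure on $A\otimes_R T$ used in Theorem~\ref{thm:end-extension} is the restricted one along $\iota$. This agrees with the left $R$-action on ${}_RA_R$ used in the bimodule decomposition. Since the decomposition is one of $R$-$R$-bimodules, it is compatible with both the tensoring on the right and the left $R$-action, so the restriction agrees. This is not a real obstacle, and the remark preceding the corollary already records the needed inclusion. No tilting, self-orthogonality or finiteness assumption on $T$ is used, consistent with the statement holding for every $R$-module $T$.
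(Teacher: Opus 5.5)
Your proposal is correct and is essentially the paper's argument: the paper also deduces this corollary directly from Theorem~\ref{thm:end-extension}, using the remark just before it that central projectivity of ${}_RA_R$ gives $A\otimes_R T\in\add_R(T)$ for every $R$-module $T$. The only difference is that you write out that remark explicitly, by tensoring the bimodule decomposition ${}_RA_R\oplus C'\cong R^n$ with $T$.
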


The following result may be viewed as the additive
Morita-theoretic form of the invariance of Frobenius extensions.
Miyachi's results in \cite{JMiy} provide a derived analogue in which
progenerators are replaced by suitable tilting complexes.

\begin{cor}\label{cor:morita-end}%3.5
Let $\iota: R\to A$ be a Frobenius extension and let $T$ be a finitely
generated projective generator of $\Mod(R)$.
Then $U = A\otimes_R T$ is a finitely generated projective generator of
$\Mod(A)$, and the natural homomorphism of endomorphism rings $\rho: S\to B$ is a Frobenius extension.
\end{cor}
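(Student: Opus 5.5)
The plan is to reduce everything to Theorem~\ref{thm:end-extension}. The only hypothesis to check is $U=A\otimes_R T\in\add_R(T)$. The statement also asks that $U$ be a finitely generated projective generator of $\Mod(A)$, which I will verify separately.

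\emph{$U$ is a progenerator over $A$.} Since $T$ is finitely generated projective over $R$, it lies in $\add_R(R)$, so $T$ is a direct summand of $R^n$ for some $n$. Applying the additive functor $A\otimes_R-$ shows that $U$ is a direct summand of $A\otimes_R R^n\cong A^n$. Hence $U$ is finitely generated projective over $A$.

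For the generator property, $T$ is a generator, so $R$ is a direct summand of $T^m$ for some $m$. This is the standard characterization of progenerators: $R\in\add_R(T)$. Inducing gives $A\in\add_A(U)$, so $U$ generates $\Mod(A)$.

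\emph{$U\in\add_R(T)$.} Since $\iota$ is a Frobenius extension, ${}_RA$ is finitely generated projective, so ${}_RA\in\add_R(R)$. As an $R$-module, $U$ is a direct summand of $A^n$ (restricting the decomposition above), so $U\in\add_R(A)\subseteq\add_R(R)$.

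Because $T$ is a generator, $R\in\add_R(T)$, and hence $\add_R(R)\subseteq\add_R(T)$. Therefore $U\in\add_R(T)$. In fact, since $T$ is itself projective, $\add_R(T)=\add_R(R)$, so this step is really just the observation that every finitely generated projective $R$-module lies in $\add_R(T)$.

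\emph{Conclusion.} With $U\in\add_R(T)$ established, Theorem~\ref{thm:end-extension} applies verbatim. It gives that $\rho:S\to B$ is a Frobenius extension, and also that $T\otimes_S B\cong U$ as $R$-$B$-bimodules.

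There is no real obstacle. The only point requiring care is to note that the restriction of $U$ lies in $\add_R(R)$, because $A$ is finitely generated projective as a left $R$-module, and not merely as a right $R$-module.
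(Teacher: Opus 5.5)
Your proposal is correct and takes essentially the same route as the paper. You verify $U\in\add_R(T)$ by viewing $U$ as a direct summand of $A^n$ and using $\add_R(R)=\add_R(T)$, then apply Theorem~\ref{thm:end-extension}, and you get the progenerator property by inducing $T$ as a summand of $R^n$ and $R\in\add_R(T)$; the only difference from the paper is the order in which the two parts are presented.
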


\begin{proof}
Since $T$ is a finitely generated projective generator,
$\add_R(T)$ is precisely $R\text{-}\mathrm{proj}$, the class of finitely generated projective $R$-modules.
The module $T$ is a direct summand of $R^n$ for some $n$, and hence
$U= A\otimes_R T$ is a direct summand of $A^n$ as $R$-modules. Since $A$ is finitely generated
projective as an $R$-module, $U\in\add_R(T)$.
Theorem~\ref{thm:end-extension} now yields the Frobenius extension
$\rho: S\to B$.

Moreover, $A\otimes_R T$ is finitely generated projective over $A$.
Since $R\in\add_R(T)$, applying $A\otimes_R-$ gives
$A\in\add_A(U)$, so $U$ is an $A$-progenerator.
\end{proof}

Taking the free progenerator gives the following familiar consequence
for matrix rings.

\begin{cor}\label{cor:matrix}%3.6
Let $R\to A$ be a Frobenius extension and let $m\geq1$. Then
$M_m(R)\longrightarrow M_m(A)$ is a Frobenius extension.
\end{cor}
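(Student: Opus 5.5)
The plan is to apply Corollary~\ref{cor:morita-end} to the free progenerator $T=R^m$ and then identify the resulting endomorphism rings and the map $\rho$ with the matrix rings and the entrywise map.

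First, $T=R^m$ is a finitely generated projective generator of $\Mod(R)$. Corollary~\ref{cor:morita-end} (equivalently Theorem~\ref{thm:end-extension}, since $U=A\otimes_R R^m\cong A^m\in\add_R(R)=\add_R(T)$ because ${}_RA$ is finitely generated projective) then shows that $\rho:S=\End_R(R^m)\to B=\End_A(A\otimes_R R^m)$ is a Frobenius extension.

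Second, we make the identifications explicit. With the right-action convention fixed in Section~\ref{sec:endo}, $R$-linear endomorphisms of $R^m$ act on row vectors by right multiplication. Hence $S=\End_R(R^m)\cong M_m(R)$, with $s$ corresponding to the matrix whose $i$-th row is $e_is$. The same argument gives $B=\End_A(A^m)\cong M_m(A)$, using the canonical isomorphism $A\otimes_R R^m\cong A^m$, $a\otimes (r_1,\dots,r_m)\mapsto(ar_1,\dots,ar_m)$.

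Third, we track $\rho$. We have $(a\otimes e_i)\rho(s)=a\otimes e_is$. Under the isomorphism above this is $a$ times the image of the $i$-th row of the matrix of $s$ under $\iota$. So the matrix of $\rho(s)$ is obtained by applying $\iota$ entrywise, and $\rho$ becomes the natural map $M_m(R)\to M_m(A)$.

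Being a Frobenius extension is invariant under replacing the rings by isomorphic ones compatibly with the structure map. This holds because the conditions (finitely generated projective, and the bimodule isomorphism $B\cong\Hom_S(B,S)$) transport along a commutative square of ring isomorphisms. The claim therefore follows.

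The only point needing care is the third step: checking that the identifications respect the right-action convention, so that we get $M_m(R)$ rather than $M_m(R)^{\op}$, and that $\rho$ really corresponds to the entrywise map. If the convention produced opposite rings on both sides, we would note that the transpose anti-isomorphism converts the entrywise map between opposite matrix rings into the entrywise map between $M_m(R)^{\op}$ and $M_m(A)^{\op}$. Then we would invoke the symmetric (right-module) form of the Frobenius definition stated in Section~\ref{sec:Waka}, since $S^{\op}\to B^{\op}$ is Frobenius exactly when $S\to B$ is.
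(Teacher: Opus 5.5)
Your proposal is correct and follows the paper's own argument: apply Corollary~\ref{cor:morita-end} to $T=R^m$ and identify $\End_R(R^m)\cong M_m(R)$ and $\End_A(A^m)\cong M_m(A)$. Your explicit check that, under the right-action convention, $\rho$ corresponds to the entrywise map $M_m(R)\to M_m(A)$ fills in a detail the paper leaves implicit. Because that check already gives the matrix rings themselves rather than their opposites, the contingency argument in your final paragraph is never needed.
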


\begin{proof}
Take $T=R^m$. With the convention of endomorphism ring,
$\End_R(R^m)\cong M_m(R)$ and $\End_A(A^m)\cong M_m(A)$.
The assertion follows directly from Corollary~\ref{cor:morita-end}.
\end{proof}

The same argument yields a useful corner construction.

\begin{cor}\label{cor:corner}%3.7
Let $\iota: R\to A$ be a Frobenius extension and let $e\in R$ be an idempotent.
If $Ae\in\add_R(Re)$ as a restricted $R$-module, then the natural homomorphism
$eRe\rightarrow eAe$ is a Frobenius extension.
In particular, the conclusion holds for every idempotent $e\in R$ if
$\iota: R\to A$ is centrally projective.
\end{cor}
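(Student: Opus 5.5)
The plan is to apply Theorem~\ref{thm:end-extension} to the $R$-module $T=Re$. The hypothesis $A\otimes_R T\in\add_R(T)$ becomes $A\otimes_R Re\in\add_R(Re)$. Since $A\otimes_R Re\cong Ae$ as left $A$-modules (via $a\otimes re\mapsto are$, with inverse $ae\mapsto a\otimes e$), it is also an isomorphism of restricted $R$-modules. So the assumption $Ae\in\add_R(Re)$ is exactly the hypothesis of the theorem, and $\rho:\End_R(Re)\to\End_A(A\otimes_R Re)$ is a Frobenius extension.

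Next I identify the two endomorphism rings with corners. With the right-action convention of this section, the map $eRe\to\End_R(Re)$ sending $ere'$ to right multiplication by it is a ring isomorphism; this is the standard fact $\Hom_R(Re,M)\cong eM$, and no opposite ring appears because endomorphisms are written on the right. In the same way $\End_A(Ae)\cong eAe$, and hence $B=\End_A(A\otimes_R Re)\cong eAe$. Under these identifications, $\rho(s)=1_A\otimes s$ corresponds to $a\otimes re\mapsto a\otimes rex$ for $x\in eRe$. Transporting through $A\otimes_R Re\cong Ae$, this is right multiplication by $x$ on $Ae$, that is, the element $\iota(x)\in eAe$. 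So $\rho$ becomes the natural map $eRe\to eAe$, $x\mapsto\iota(x)$.

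The one step needing care is checking that these identifications commute with $\rho$, which I expect to be a short diagram chase. Then I use that a ring extension isomorphic (compatibly) to a Frobenius extension is Frobenius, since both defining conditions, finite generation and projectivity of the module and the bimodule isomorphism, transport along ring isomorphisms.

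For the centrally projective case, as noted before Corollary~\ref{cor:end-central}, we have $A\otimes_R T\in\add_R(T)$ for every $T$. Taking $T=Re$ gives $Ae\in\add_R(Re)$ for every idempotent $e$, so the first part applies.
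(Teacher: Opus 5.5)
Your proposal is correct and follows the paper's proof. The paper also takes $T=Re$, uses $A\otimes_R Re\cong Ae$ and the identifications $\End_R(Re)\cong eRe$, $\End_A(Ae)\cong eAe$, then applies Theorem~\ref{thm:end-extension}, and handles the centrally projective case by tensoring ${}_RA_R\in\add_{R\text{-}R}({}_RR_R)$ with $Re$; your explicit check that $\rho$ becomes $x\mapsto\iota(x)$ under these identifications is a detail the paper leaves implicit.
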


\begin{proof}
Take $T=Re$. Then $A\otimes_RRe\cong Ae$. Note that
$\End_R(Re)\cong eRe$ and $\End_A(Ae)\cong eAe$.
Thus the first assertion is an immediate consequence of
Theorem~\ref{thm:end-extension}.

If $\iota: R\to A$ is centrally projective, then
${}_RA_R\in\add_{R\text{-}R}({}_RR_R)$.
Tensoring with $Re$ gives $Ae\in\add_R(Re)$ as an $R$-module, and the second assertion follows.
\end{proof}

For Artin algebras, the additive hypothesis is automatic when one starts
with an additive generator. Recall that an Artin algebra $R$ is of finite
representation type if and only if the category of finitely generated $R$-modules
$\mathrm{mod}(R)$ admits an additive generator. If $R$ is of finite representation type and
$T$ is a basic additive generator of $\mathrm{mod}(R)$, that is,
$T=\bigoplus_{i=1}^n T_i$, where $T_1,\ldots,T_n$ form a complete set of representatives of the
isomorphism classes of indecomposable finitely generated $R$-modules,
then the corresponding endomorphism algebra is the Auslander algebra
associated with $R$; see \cite[Chapter~VI, Section~5]{ARS}.

Consequently, the following corollary yields a Frobenius extension from
the Auslander endomorphism algebra associated with $R$ to the
endomorphism algebra of the induced module $A\otimes_R T$. If
$A\otimes_R T$ is also an additive generator of $\mathrm{mod}(A)$,
then $A$ is of finite representation type and the target endomorphism
algebra is likewise the Auslander algebra associated with $A$.
It is worth noting that we use our fixed right-action convention for
endomorphism rings; this differs by an opposite ring from the convention
used in \cite[Chapter~VI, Section~5]{ARS}.

\begin{cor}\label{cor:artin}%3.8
Let $\iota: R\to A$ be a Frobenius extension of Artin algebras and let $T$ be
an additive generator of $\mathrm{mod}(R)$. Then the natural homomorphism of endomorphism rings
$\rho: S\to B$ is a Frobenius extension.
\end{cor}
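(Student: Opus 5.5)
The plan is to reduce Corollary~\ref{cor:artin} to Theorem~\ref{thm:end-extension}. Once we know that $U=A\otimes_R T\in\add_R(T)$ as an $R$-module, that theorem says directly that $\rho:S\to B$ is a Frobenius extension. So the only point to check is this additive membership. By the definition of an additive generator, $\add_R(T)$ is exactly the class of finitely generated $R$-modules. The task therefore reduces to showing that the restricted module ${}_RU$ is finitely generated over $R$.

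For this, I would first note that $T$ is finitely generated over $R$, being an additive generator of $\mathrm{mod}(R)$. Choose an epimorphism $R^n\to T$. Since $A\otimes_R-$ is right exact, it gives an epimorphism $A^n\cong A\otimes_R R^n\to A\otimes_R T=U$ of $A$-modules, and hence also of $R$-modules. Because $\iota:R\to A$ is a Frobenius extension, $A$ is finitely generated projective as a left $R$-module. Thus ${}_RA^n$ is finitely generated, and so is its quotient ${}_RU$.

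It follows that $U\in\mathrm{mod}(R)=\add_R(T)$, and Theorem~\ref{thm:end-extension} applies with this $T$. This yields both that $\rho:S\to B$ is a Frobenius extension and, as a bonus, that $T\otimes_S B\cong U$ as $R$-$B$-bimodules.

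There is no genuine obstacle here. The only point needing care is that ``additive generator'' is taken within $\mathrm{mod}(R)$ and not $\Mod(R)$. This is exactly why finite generation of the restriction ${}_RU$, guaranteed by the finite generation of ${}_RA$, is the key input.
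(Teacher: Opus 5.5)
Your proposal is correct and follows essentially the same route as the paper: both observe that ${}_R(A\otimes_R T)$ is finitely generated because ${}_RA$ is, hence lies in $\mathrm{mod}(R)=\add_R(T)$, and then apply Theorem~\ref{thm:end-extension}. Your explicit epimorphism $A^n\to U$, read as a map of left $R$-modules, simply spells out the finite-generation step that the paper states in one line.
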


\begin{proof}
As $A$ is a finitely generated projective $R$-module, it yields that $A\otimes_R T$ is also a
finitely generated $R$-module, that is, $A\otimes_R T\in\mathrm{mod}(R)$.
Since $T$ is an additive generator of $\mathrm{mod}(R)$, we have
$\mathrm{mod}(R)=\add_R(T)$, and therefore
$A\otimes_R T\in\add_R(T)$. The assertion now follows from Theorem~\ref{thm:end-extension}.
\end{proof}

\medskip

\section{Applications and examples}
\label{sec:app-exm}

\noindent We now record several consequences of the preceding results. Throughout this section, we
assume, unless otherwise stated, that $\iota:R\to A$ is a Frobenius extension, $T$ is an $R$-module,
$U=A\otimes_RT$, $S=\End_R(T)$, $B=\End_A(U)$, and $U\in\add_R(T)$ as an $R$-module.
By Theorem~\ref{thm:end-extension}, the natural homomorphism $\rho:S\to B$ is then a Frobenius
extension and there is an isomorphism of $R$-$B$-bimodules $U\cong T\otimes_SB$.

We first show that the Frobenius extensions $\iota: R\to A$ and $\rho: S\to B$
are compatible with the equivalences determined by
${}_RT_S$ and ${}_AU_B$. Consider the standard mutually quasi-inverse equivalences
and the following diagram
\[
\xymatrix@C=80pt@R=40pt{
S\text{-}\mathrm{proj}
 \ar@<0.5ex>[r]^{T\otimes_S-}
 \ar@<-1ex>[d]_{B\otimes_S-}
&\add_R(T)
 \ar@<0.5ex>[l]^{\Hom_R(T,-)}
 \ar@<1ex>[d]^{A\otimes_R-}\\
B\text{-}\mathrm{proj}
 \ar@<0.5ex>[r]^{U\otimes_B-}
 \ar@<-1ex>[u]_{\Res}
&\add_A(U)
 \ar@<0.5ex>[l]^{\Hom_A(U,-)}
 \ar@<1ex>[u]^{\Res}.
}
\]

%\[T\otimes_S-: S\text{-}\mathrm{proj}\rightleftarrows \add_R(T): \Hom_R(T,-), U\otimes_B-: B\text{-}\mathrm{proj} \rightleftarrows \add_A(U): \Hom_A(U,-).\]

\begin{prop}\label{prop:add-compat}%4.1
Under the above assumptions, extension of scalars is compatible with
these equivalences. More precisely, the following
natural isomorphisms hold:

\begin{enumerate}
\item[(i)] for every $P\in S\text{-}\mathrm{proj}$,
$A\otimes_R(T\otimes_S P)\cong U\otimes_B(B\otimes_S P)$;

\item[(ii)] for every $X\in \add_A(U)$,
$\Hom_R(T, \Res X)\cong \Res\Hom_A(U,X)$
as left $S$-modules, where the restriction on the right-hand side is
along $\rho:S\to B$;

\item[(iii)] for every $M\in\add_R(T)$,
$B\otimes_S\Hom_R(T,M)\cong \Hom_A(U,A\otimes_R M)$
as left $B$-modules;

\item[(iv)] for every $Q\in B\text{-}\mathrm{proj}$,
$T\otimes_S \Res Q\cong \Res(U\otimes_B Q)$.
\end{enumerate}
\end{prop}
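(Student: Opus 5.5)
The plan is to derive all four isomorphisms from the $R$-$B$-bimodule isomorphism $T\otimes_S B\cong U$ of Theorem~\ref{thm:end-extension}, the adjunction $\Hom_A(A\otimes_R T,-)\cong\Hom_R(T,\Res -)$, and the fact (Lemma~\ref{lem:yoneda}) that $T\otimes_S-$ and $\Hom_R(T,-)$ are mutually quasi-inverse between $S\text{-}\mathrm{proj}$ and $\add_R(T)$. In each case we first construct the comparison map and only then check bijectivity, reducing to free modules by additivity where needed.

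For (i), associativity of tensor products gives
\[ U\otimes_B(B\otimes_S P)\cong U\otimes_S P\cong (A\otimes_R T)\otimes_S P\cong A\otimes_R(T\otimes_S P), \]
where $U$ is regarded as a right $S$-module via $\rho$. The right $S$-structure on $A\otimes_R T$ coming from $\rho$ is exactly $(a\otimes t)s=a\otimes ts$, so the middle step is honest associativity. This works for any $P$, not only projective ones.

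For (ii), the isomorphism is the adjunction $\Hom_A(A\otimes_R T,X)\cong\Hom_R(T,\Res X)$, given by $f\mapsto f(1\otimes -)$. We need it to be $S$-linear, where the left $S$-action on $\Hom_A(U,X)$ is the restriction along $\rho$ of the left $B$-action $(bf)(u)=f(ub)$. For $s\in S$ we have $(\rho(s)f)(1\otimes t)=f(1\otimes ts)$, which is exactly the $S$-action on $\Hom_R(T,X)$. Again this holds for every $A$-module $X$.

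For (iii), we define the map
\[ B\otimes_S\Hom_R(T,M)\to\Hom_A(U,A\otimes_R M),\qquad b\otimes g\mapsto (1\otimes g)\circ b, \]
where $b$ acts on the right (so it is applied first in composition order) and $1\otimes g:A\otimes_R T\to A\otimes_R M$. We check that the map is balanced over $S$ and left $B$-linear. Both sides are additive in $M$, so it suffices to treat $M=T$. There the left side is $B\otimes_S S\cong B$, the right side is $\Hom_A(U,U)=B$, and the map is $b\mapsto b$. This is the one place requiring care with the right-action conventions: we must confirm that $(1\otimes g)\circ b$ matches the $B$-module structure on $\Hom_A(U,-)$.

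For (iv), write $Q\in\add(B)$. Then
\[ U\otimes_B Q\cong (T\otimes_S B)\otimes_B Q\cong T\otimes_S Q \]
as left $R$-modules, using the $R$-$B$-bimodule isomorphism from Theorem~\ref{thm:end-extension}; this also holds for arbitrary $Q$. Naturality of all four isomorphisms follows from the naturality of the constructions. The main obstacle is the bookkeeping of bimodule structures in (ii) and (iii), not any substantive difficulty.
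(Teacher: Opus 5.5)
Your proposal is correct. Parts (i), (ii) and (iv) follow the paper's argument: associativity, the induction--restriction adjunction, and the bimodule isomorphism $U\cong T\otimes_S B$ from Theorem~\ref{thm:end-extension}. Your check $(\rho(s)f)(1\otimes t)=f(1\otimes ts)$ supplies the $S$-linearity that the paper leaves implicit.

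The real difference is in (iii). The paper argues abstractly. Both $B\otimes_S\Hom_R(T,M)$ and $\Hom_A(U,A\otimes_R M)$ lie in $B\text{-}\mathrm{proj}$. The functor $U\otimes_B-$ sends both to modules isomorphic to $A\otimes_R M$: the first via associativity and Lemma~\ref{lem:yoneda}(i), the second via the equivalence $B\text{-}\mathrm{proj}\simeq\add_A(U)$. Full faithfulness of $U\otimes_B-$ on $B\text{-}\mathrm{proj}$ then lifts this to a $B$-isomorphism.

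You instead write down the map $b\otimes g\mapsto(1\otimes g)\circ b$ directly. It is indeed $S$-balanced and left $B$-linear; both checks reduce to $(ub)\rho(s)=\sum a_i\otimes t_is$ when $ub=\sum a_i\otimes t_i$. It is natural in $M$. At $M=T$ it is the canonical isomorphism $B\otimes_S S\to B$, $b\otimes s\mapsto b\rho(s)$. Additivity and passage to direct summands then finish the job.

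Your route gives an explicit, visibly natural isomorphism and uses nothing beyond $B\otimes_S S\cong B$. The paper's route is shorter, given the equivalences already set up. However, it obtains the isomorphism only by lifting through a fully faithful functor, so its naturality is implicit rather than displayed.
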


\begin{proof}
The first isomorphism follows immediately from associativity of tensor products:
\[A\otimes_R(T\otimes_SP)\cong U\otimes_SP\cong U\otimes_B(B\otimes_SP).\]
The second isomorphism is the induction--restriction adjunction.

For $M\in\add_R(T)$, both $B\otimes_S\Hom_R(T,M)$ and
$\Hom_A(U,A\otimes_RM)$ are finitely generated projective left $B$-modules. Applying
$U\otimes_B-$ to the first one gives
\[
\begin{aligned}
U\otimes_B
 \bigl(B\otimes_S\Hom_R(T,M)\bigr)
&\cong
U\otimes_S\Hom_R(T,M)\\
&\cong
A\otimes_R
 \bigl(T\otimes_S\Hom_R(T,M)\bigr)\\
&\cong
A\otimes_RM.
\end{aligned}
\]
For $M\in\add_R(T)$, it is clear that $A\otimes_R M\in \add_A(U)$. Hence, the standard
mutually quasi-inverse equivalence shows that $U\otimes_B\Hom_A(U,A\otimes_RM)\cong A\otimes_RM$.
Since $U\otimes_B-$ is fully faithful on $B\text{-}\mathrm{proj}$, the third isomorphism follows.

Recall that there is an isomorphism of $R$-$B$-bimodules $U\cong T\otimes_SB$. Hence,
for every $Q\in B\text{-}\mathrm{proj}$, the forth isomorphism holds.
\end{proof}

The preceding proposition concerns only additive closures. For the corresponding Ext- and Tor-orthogonal classes, the appropriate general setting is given by the Auslander and Bass classes. We refer to \cite[Definition 2.1]{HW}
the notion of semidualizing modules over associative rings. Let ${}_RT_S$ be a Wakamatsu tilting bimodule.
It follows from \cite[Corollary 3.2]{Wak04} that ${}_RT_S$ is semidualizing.
The associated Auslander and Bass classes are as follows:
\[ \mathcal{A}_T(S) = \left\{ M\in \Mod(S) \, \vline \,
\begin{matrix} \Tor_i^S(T, M)=\Ext_R^i(T,T\otimes_S M)=0, \forall i>0,
\\
 \,M\rightarrow\Hom_R(T,T\otimes_S M) \text{ is an isomorphism}
 \end{matrix} \right\},
\]
and
\[ \mathcal{B}_T(R) = \left\{ N\in \Mod(R) \, \vline \,
\begin{matrix} \Ext_R^i(T, N)=\Tor_i^S(T,\Hom_R(T,N))=0, \forall i>0,
\\
 \,T\otimes_S\Hom_R(T,N)\longrightarrow N \text{ is an isomorphism}
 \end{matrix} \right\}.
\]
%Define $\mathcal A_T(S)$ to consist of all $N\in S\text{-}\Mod$ such that $\Tor_i^S(T,N)=\Ext_R^i(T,T\otimes_SN)=0$ for all $i>0$, and the canonical unit $N\rightarrow\Hom_R(T,T\otimes_SN)$ is an isomorphism. Dually, let $\mathcal B_T(R)$ consist of all $M\in R\text{-}\Mod$ such that for all $i>0$, $\Ext_R^i(T,M)=\Tor_i^S(T,\Hom_R(T,M))=0$, and the evaluation map $T\otimes_S\Hom_R(T,M)\longrightarrow M$ is an isomorphism.
The standard Foxby equivalence (see, for example \cite[Proposition 4.1]{HW}) gives
\[ \xymatrix@C=80pt{
\mathcal A_T(S) \ar@<0.6ex>[r]^{T\otimes_S-}&\mathcal B_T(R)
 \ar@<0.6ex>[l]^{\Hom_R(T,-)} .} \]

We consider the subclasses
\[\KT(T_S)= \{\,M\in \Mod(S)\mid \Tor_i^S(T, M)=0\text{ for all }i>0\,\},\]
\[ \KE({}_RT)=\{\,N\in \Mod(R) \mid \Ext_R^i(T, N)=0\text{ for all }i>0\,\},\]
and have the following Brenner--Butler--Miyashita equivalence; compare
\cite[Section III.3]{Hap} and \cite[Theorem 1.16]{YMiy}.

\begin{prop}\label{prop:BBM}%4.2
Let ${}_RT_S$ be a Wakamatsu tilting bimodule. If $T$ has finite projective
dimension on both sides, then the adjunction %{eq:BBM}
\[T\otimes_S-:\KT(T_S)\rightleftarrows\KE({}_RT):\Hom_R(T,-) \]
is an equivalence of categories. Moreover, $\mathcal A_T(S)=\KT(T_S)$ and $\mathcal B_T(R)=\KE({}_RT)$.
\end{prop}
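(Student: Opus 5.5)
Since $\mathcal A_T(S)\subseteq\KT(T_S)$ and $\mathcal B_T(R)\subseteq\KE({}_RT)$ by definition, and the Foxby equivalence \cite[Proposition~4.1]{HW} already identifies $\mathcal A_T(S)$ with $\mathcal B_T(R)$ via $T\otimes_S-$ and $\Hom_R(T,-)$, the plan is to prove the reverse inclusions $\KT(T_S)\subseteq\mathcal A_T(S)$ and $\KE({}_RT)\subseteq\mathcal B_T(R)$. Once these equalities hold, the asserted equivalence is just the Foxby equivalence restricted to these classes. The hypotheses to use are that ${}_RT_S$ is semidualizing, in particular $R\cong\End_{S^{\op}}(T)$ and $S=\End_R(T)$ with $T$ self-orthogonal on both sides, together with $\pd_R T=n<\infty$ and $\pd_{S^{\op}}T=m<\infty$. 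By Wakamatsu's two-sided symmetry \cite[Corollary~3.2]{Wak04}, the finite projective dimension on both sides makes $T$ a classical (Miyashita) tilting module on both sides.

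For $\KE({}_RT)\subseteq\mathcal B_T(R)$, take $N$ with $\Ext_R^{>0}(T,N)=0$. Since $\pd_R T=n$, every $R$-module has a finite $\KE$-style resolution. Concretely, I would build an exact sequence $0\to T_n\to\cdots\to T_0\to N\to 0$ with $T_j\in\mathrm{Add}(T)$ that stays exact under $\Hom_R(T,-)$. This is the standard Miyashita argument: take an $\mathrm{Add}(T)$-precover $T^{(\Hom(T,N))}\to N$, which is surjective because $N$ is generated by $T$ (use the coresolution $0\to R\to T_0\to\cdots\to T_n\to0$ together with $\Ext^{>0}(T,N)=0$). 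The kernel again lies in $\KE$, and dimension shifting against $\pd T=n$ terminates the process after $n$ steps. Applying $\Hom_R(T,-)$ gives a projective $S$-resolution $0\to\Hom(T,T_n)\to\cdots\to\Hom(T,T_0)\to\Hom(T,N)\to0$. Tensoring back with $T$, the evaluation isomorphisms (Lemma~\ref{lem:yoneda}(i), extended to $\mathrm{Add}(T)$ because $T$ is finitely presented) return the original exact sequence. This yields $\Tor_i^S(T,\Hom_R(T,N))=0$ for $i>0$ and shows that $T\otimes_S\Hom_R(T,N)\to N$ is an isomorphism.

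For $\KT(T_S)\subseteq\mathcal A_T(S)$, take $M$ with $\Tor^S_{>0}(T,M)=0$, and argue dually using a projective resolution $P_\bullet\to M$. Tensoring with $T$ gives the exact complex $T\otimes_SP_\bullet\to T\otimes_SM$ with terms in $\mathrm{Add}(T)$. Applying $\Hom_R(T,-)$ recovers $P_\bullet$ via $\Hom_R(T,T\otimes_SP)\cong P$. The task is then to show that $\Hom_R(T,-)$ keeps this complex exact and that $\Ext^{>0}_R(T,T\otimes_SM)=0$. I would use the finiteness of $\pd_{S^{\op}}T=m$: the $m$-th syzygy $K$ of $M$ satisfies $\Tor_{>0}^S(T,K)=0$ automatically and lies in the $\KT$-part. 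The symmetric tilting structure over $S$ then gives a finite $\mathrm{Add}$-projective coresolution argument, obtained via the dual resolution of $S$ by $\add(T_S)$ of length $m$, placing $T\otimes_SM$ in $\KE({}_RT)$. Combined with the previous paragraph, this gives $\Hom_R(T,T\otimes_SM)\cong M$ by comparing resolutions.

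The main obstacle is this last step, namely proving $\Ext_R^{>0}(T,T\otimes_SM)=0$ from Tor-vanishing alone. I would first try the derived-category route. Because $T$ is tilting on both sides, $T\otimes^{\mathbf L}_S-$ and $\mathbf R\Hom_R(T,-)$ are mutually inverse equivalences $\mathbf D(S)\simeq\mathbf D(R)$, so $\mathbf R\Hom_R(T,T\otimes^{\mathbf L}_SM)\cong M$. When $M\in\KT(T_S)$, the complex $T\otimes^{\mathbf L}_SM$ is simply $T\otimes_SM$, and comparing cohomology gives both the Ext-vanishing and the unit isomorphism at once. The same argument applied to $\KE$ gives a shortcut for the second inclusion as well.
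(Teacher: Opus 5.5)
Your final derived-category argument is correct, and it runs in the opposite direction from the paper's proof. The paper cites \cite[Corollary~4.2]{CH} for the equivalence $\KT(T_S)\simeq\KE({}_RT)$. It then reads off the extra Ext/Tor conditions of $\mathcal A_T(S)$ and $\mathcal B_T(R)$ from the fact that unit and counit are isomorphisms on these classes. You instead prove $\KT(T_S)=\mathcal A_T(S)$ and $\KE({}_RT)=\mathcal B_T(R)$ first, by taking cohomology of $M\cong\mathbf R\Hom_R(T,T\otimes_S M)$ for $M\in\KT(T_S)$ and of $N\cong T\otimes^{\mathbf L}_S\Hom_R(T,N)$ for $N\in\KE({}_RT)$. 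The equivalence then comes afterwards. In fact the same computation shows the functors land in the right classes with invertible unit and counit, so even the Foxby equivalence of \cite{HW} is dispensable. The paper's route is shorter but puts all the content into the citation. Yours is self-contained modulo the standard derived equivalence for a classical tilting module, and it shows that two-sided finiteness is used only to make ${}_RT$ tilting.

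Two points need repair.

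First, that ${}_RT$ is classical tilting does not follow from \cite[Corollary~3.2]{Wak04} alone. That result only gives that $T_S$ is Wakamatsu tilting and $R\cong\End_{S^{\op}}(T)$; with only $\pd_RT<\infty$ the claim would be the open Wakamatsu Tilting Conjecture. This is the one step carrying real content, so spell it out:
\begin{enumerate}
\item[(a)] Take a finite resolution $0\to Q_m\to\cdots\to Q_0\to T_S\to 0$ by finitely generated projective right $S$-modules and apply $\Hom_{S^{\op}}(-,T)$.
\item[(b)] Since $\Ext^{i}_{S^{\op}}(T,T)=0$ for $i>0$ and $\End_{S^{\op}}(T)\cong R$, you get an exact sequence $0\to R\to\Hom_{S^{\op}}(Q_0,T)\to\cdots\to\Hom_{S^{\op}}(Q_m,T)\to 0$ with all terms in $\add({}_RT)$.
\item[(c)] Together with a finite resolution of ${}_RT$ by finitely generated projectives (truncate the given one at $\pd_RT$), this makes ${}_RT$ a tilting module.
\item[(d)] Hence $R\in\mathrm{thick}({}_RT)$, and $\mathbf R\Hom_R(T,-)$ is an equivalence $\mathbf D(R)\simeq\mathbf D(S)$.
\end{enumerate}

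Second, in your direct argument for $\KE({}_RT)\subseteq\mathcal B_T(R)$, you claim every $N\in\KE({}_RT)$ has an $\mathrm{Add}(T)$-resolution of length $\pd_RT$. This is false: for $T=R$ it would force every module to be projective. It is also unnecessary. The iterated precovers give an infinite $\Hom_R(T,-)$-exact $\mathrm{Add}(T)$-resolution, and the Tor-vanishing and the evaluation isomorphism follow from it exactly as you describe.

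Finally, your syzygy sketch for $\KT(T_S)\subseteq\mathcal A_T(S)$ is not yet an argument, but the derived route replaces it.
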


\begin{proof}
By \cite[Corollary~4.2]{CH}, the adjoint pair
\[T\otimes_S-:\Mod(S)\rightleftarrows\Mod(R):\Hom_R(T,-)\]
restricts to an equivalence
$\KT(T_S)\simeq\KE({}_RT)$.
Thus, for every $M\in\KT(T_S)$, one has
$T\otimes_S M\in\KE({}_RT)$,
and the adjunction unit
$M\rightarrow\Hom_R(T,T\otimes_S M)$
is an isomorphism. Hence
$\Ext_R^i(T,T\otimes_S M)=0$ for all $i>0$.
So, the additional conditions defining the Auslander class are
automatically satisfied. Therefore $\mathcal A_T(S)=\KT(T_S)$.

Dually, for every $N\in\KE({}_RT)$, one has
$\Hom_R(T, N)\in\KT(T_S)$, and the adjunction counit
$T\otimes_S\Hom_R(T, N)\rightarrow N$
is an isomorphism. In particular,
$\Tor_i^S\bigl(T,\Hom_R(T, N)\bigr)=0$ for all $i>0$.
Consequently, $\mathcal B_T(R)=\KE({}_RT)$.
\end{proof}

We now combine this Brenner--Butler--Miyashita equivalence with the preceding results on
Frobenius extensions.

\begin{cor}\label{cor:BBM-compat}%4.3
Let $\iota: R\to A$ be a Frobenius extension and let $T$ be a
Wakamatsu tilting $R$-module. Assume that
$U =A\otimes_RT \in\add_R(T)$ and that $T$ has finite
projective dimension both as a left $R$-module and as a right
$S$-module.
Then $U$ is Wakamatsu tilting over $A$, $\rho: S\to B$ is a Frobenius
extension, and $U$ has finite projective dimension both
as a left $A$-module and as a right $B$-module.

Moreover, the Brenner--Butler--Miyashita equivalences associated
with ${}_RT_S$ and ${}_AU_B$ are compatible with extension and
restriction of scalars. More precisely, the following diagram commutes
up to natural isomorphism:
\[
\xymatrix@C=80pt@R=40pt{
\KT(T_S)
 \ar@<0.5ex>[r]^{T\otimes_S-}
 \ar@<-1ex>[d]_{B\otimes_S-}
&
\KE({}_RT)
 \ar@<0.5ex>[l]^{\Hom_R(T,-)}
 \ar@<1ex>[d]^{A\otimes_R-}
\\
\KT(U_B)
 \ar@<0.5ex>[r]^{U\otimes_B-}
 \ar@<-1ex>[u]_{\Res}
&
\KE({}_AU)
 \ar@<0.5ex>[l]^{\Hom_A(U,-)}
 \ar@<1ex>[u]^{\Res}.
}
\]
\end{cor}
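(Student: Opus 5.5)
The first three assertions follow from earlier results. Corollary~\ref{cor:asc+1} applies because $U\in\add_R(T)$, so $U$ is Wakamatsu tilting over $A$. Theorem~\ref{thm:end-extension} gives that $\rho:S\to B$ is a Frobenius extension with $U\cong T\otimes_SB$ as $R$-$B$-bimodules.

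For the projective dimensions I would argue as follows.
\begin{enumerate}
\item[(a)] Over $A$, apply the exact functor $A\otimes_R-$ to a finite projective resolution of ${}_RT$. This gives $\pd_A U\le \pd_R T$, exactly as in Proposition~\ref{prop:tilting}.
\item[(b)] Over $B$, take a finite projective resolution $Q_\bullet\to T_S$ and apply $-\otimes_SB$. Since $\rho$ is Frobenius, ${}_SB$ is finitely generated projective, so $-\otimes_SB$ is exact and sends projective right $S$-modules to projective right $B$-modules. This yields a finite projective resolution of $T\otimes_SB\cong U_B$.
\end{enumerate}
Hence $U$ has finite projective dimension on both sides. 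Combined with \cite[Corollary~3.2]{Wak04}, ${}_AU_B$ is a Wakamatsu tilting bimodule, so Proposition~\ref{prop:BBM} supplies the bottom equivalence $\KT(U_B)\simeq\KE({}_AU)$.

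Next I would check that the four vertical functors preserve the relevant subcategories. The Ext side uses only $U\in\add_R(T)$ and the adjunctions.
\begin{enumerate}
\item[(c)] For $N\in\KE({}_RT)$, Lemma~\ref{lem:isoms} gives $\Ext_A^i(U,A\otimes_RN)\cong\Ext_R^i(U,N)$. This vanishes because $U\in\add_R(T)$.
\item[(d)] For $X\in\KE({}_AU)$, Lemma~\ref{lem:isoms} gives $\Ext_R^i(T,\Res X)\cong\Ext_A^i(U,X)=0$.
\end{enumerate}
The Tor side uses the flat change of rings along $\rho$. By (b), a projective resolution of $T_S$ tensored with $B$ resolves $U_B$. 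Similarly, ${}B_S$ is finitely generated projective as a right $S$-module, so a projective resolution $P_\bullet\to M$ over $S$ gives a projective resolution $B\otimes_SP_\bullet$ of $B\otimes_SM$ over $B$.
\begin{enumerate}
\item[(e)] For $Q\in\KT(U_B)$, (b) gives $\Tor_i^S(T,\Res Q)\cong\Tor_i^B(U,Q)=0$.
\item[(f)] For $M\in\KT(T_S)$, the second resolution gives $\Tor_i^B(U,B\otimes_SM)\cong\Tor_i^S(U_S,M)$. Since $B_S\in\add(S_S)$, we have $U_S\cong T\otimes_SB\in\add(T_S)$, so this Tor vanishes.
\end{enumerate}

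Finally, I would establish the commutativity isomorphisms as in Proposition~\ref{prop:add-compat}, now on the larger classes.
\begin{enumerate}
\item[(g)] For $M\in\KT(T_S)$, associativity gives $A\otimes_R(T\otimes_SM)\cong U\otimes_SM\cong U\otimes_B(B\otimes_SM)$.
\item[(h)] For $X\in\KE({}_AU)$, the induction--restriction adjunction gives $\Hom_R(T,\Res X)\cong\Res\Hom_A(U,X)$.
\item[(i)] For $Q\in\KT(U_B)$, the bimodule isomorphism $U\cong T\otimes_SB$ gives $T\otimes_S\Res Q\cong\Res(U\otimes_BQ)$.
\item[(j)] For the remaining square, $B\otimes_S\Hom_R(T,N)\cong\Hom_A(U,A\otimes_RN)$ with $N\in\KE({}_RT)$, I would not compute directly. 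Both sides lie in $\KT(U_B)$, by (f) together with Proposition~\ref{prop:BBM}, and by (c) respectively. By (g) and the counit isomorphism, their images under the equivalence $U\otimes_B-$ are both isomorphic to $A\otimes_RN$, naturally in $N$. Full faithfulness of $U\otimes_B-$ on $\KT(U_B)$ then yields the isomorphism.
\end{enumerate}

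I expect the main obstacle to be the right-module bookkeeping in (b) and (f). One must confirm that the $R$-$B$-bimodule isomorphism $U\cong T\otimes_SB$ restricts to the correct right $S$-structure on $U$. One must also confirm that $B$ is finitely generated projective on both sides over $S$, which follows from the Frobenius property of $\rho$.
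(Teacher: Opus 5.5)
Your route is the paper's. You use Corollary~\ref{cor:asc+1} and Theorem~\ref{thm:end-extension} for the first assertions, tensor finite resolutions with $A$ and with $B$ for the projective dimensions, and use change of rings for the four class-preservation statements. Your items (g)--(j) go further than the paper, which omits the commutativity check. In particular (j) is correct and mirrors Proposition~\ref{prop:add-compat}(iii): you compare both sides after applying $U\otimes_B-$, which is fully faithful on $\KT(U_B)$.

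\textbf{One justification is wrong}, although the claim it supports is true. In (f) you deduce $U_S\cong T\otimes_SB\in\add(T_S)$ from $B_S\in\add(S_S)$. In $T\otimes_SB$ the tensor is taken over the \emph{left} $S$-structure of $B$. A right-module decomposition $B_S\oplus C_S\cong S^n_S$ is not compatible with that left structure. To argue this way you would need ${}_SB_S\in\add({}_SS_S)$, i.e.\ central projectivity of $\rho$, which is not assumed. The skew group ring of Example~\ref{exm:skew-group-end} shows the mirror-image failure: ${}_RA$ is free, yet $A\otimes_RT\cong\bigoplus_g{}^gT$ need not lie in $\add_R(T)$.

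The correct reason is on the $R$-side. As a right $S$-module, $U=A\otimes_RT$ with $S$ acting through $T$, and this agrees with the action via $\rho$. Since $\iota$ is Frobenius, $A_R\in\add(R_R)$. Tensoring a decomposition $A_R\oplus C'_R\cong R^n_R$ with ${}_RT_S$ then gives $U_S\in\add(T_S)$. Equivalently, as in the paper, exactness of $A\otimes_R-$ gives $\Tor_i^S(U,M)\cong A\otimes_R\Tor_i^S(T,M)=0$. With this repair the proposal is complete.
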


\begin{proof}
Since $U\in\add_R(T)$, Corollary~\ref{cor:asc+1} shows that
$U$ is Wakamatsu tilting over $A$. By Theorem~\ref{thm:end-extension}, the natural homomorphism
$\rho:S\to B$ is a Frobenius extension, and there is an isomorphism of
$R$-$B$-bimodules $U\cong T\otimes_S B$. %{eq:U-TB-BBM}

By assumption, $T$ has finite projective dimension both as a left
$R$-module and as a right $S$-module. Since $A$ is projective as a right $R$-module,
tensoring a finite projective resolution of ${}_RT$ with $A$ shows that
$\pd_A U<\infty$. Note that $\iota: S\to B$ is Frobenius, then $B$ is projective as a left $S$-module. Hence,
invoking the isomorphism of right $B$-modules $U\cong T\otimes_S B$, by
tensoring a finite projective resolution of $T_S$ with $B$ yields a
finite projective resolution of $U_B$. Therefore, $\pd_{B^{\op}}U<\infty$.
Consequently, Proposition~\ref{prop:BBM} applies to both
${}_RT_S$ and ${}_AU_B$, and we have the associated Brenner--Butler--Miyashita equivalences.

We next verify that extension and restriction of scalars preserve the
corresponding orthogonal classes.
Let $M\in\KT(T_S)$. Since both $\iota: R\to A$ and
$\rho: S\to B$ are Frobenius extensions, the relevant restriction and
extension functors are exact and preserve projectives. Therefore, standard isomorphism under
change of rings gives
\[ \Tor_i^B(U,B\otimes_S M)\cong\Tor_i^S(U, M)\cong
A\otimes_R\Tor_i^S(T, M)=0 \]
for all $i>0$. Thus $B\otimes_S M\in\KT(U_B)$.
Conversely, if $X \in\KT(U_B)$, then the isomorphism $U\cong T\otimes_S B$ %{eq:U-TB-BBM}
yields
\[ \Tor_i^S(T, X)\cong \Tor_i^B(T\otimes_SB,X)\cong\Tor_i^B(U,X)=0 \]
for all $i>0$. Hence $X\in\KT(T_S)$ as an $S$-module.

Now let $N\in\KE({}_RT)$. Since $U\in\add_R(T)$ and $\Ext_R^i(T,N)=0$ for all $i>0$,
by Lemma~\ref{lem:isoms} we have
\[\Ext_A^i(U,A\otimes_R N)\cong\Ext_R^i(U, N) =0.\]
Hence $A\otimes_R N\in\KE({}_AU)$.
Conversely, if $Y\in\KE({}_AU)$, then
$\Ext_R^i(T, Y)\cong \Ext_A^i(U, Y)=0$ for all $i>0$.
Therefore, as a restricted $R$-module, $Y\in\KE({}_RT)$.

The commutativity of the diagram follows from the same tensor--Hom
adjunction arguments as in Proposition~\ref{prop:add-compat}; we omit
the details.
\end{proof}

We conclude with two specific examples.

\begin{exm}\label{exm:RG}%4.4
Let $G$ be a finite group. For every ring $R$, since
${}_RRG_R\cong\bigoplus_{g\in G}{}_RR_R$, the extension of group ring
$R\rightarrow RG$ is split, centrally projective and Frobenius.
Hence, for every left $R$-module $T$,
$\End_R(T)\rightarrow \End_{RG}(RG\otimes_RT)$
is Frobenius. Moreover, Wakamatsu tilting is preserved and reflected:
$T$ is Wakamatsu tilting over $R$ if and only if
$RG\otimes_R T$ is Wakamatsu tilting over $RG$.
Whenever the hypotheses of finite projective dimension in
Corollary~\ref{cor:BBM-compat} hold, the corresponding
Brenner--Butler--Miyashita equivalences are compatible with induction
from $R$ to $RG$.
The same conclusions also hold for the truncated polynomial extension
$R\rightarrow R[x]/(x^n)$ ($n\geq 2$) when $x$ is central.
\end{exm}

Central projectivity guarantees the condition
$A\otimes_RT\in \add_R(T)$,
but the latter may also hold for Frobenius extensions that are not centrally
projective, as the following example shows.

\begin{exm}\label{exm:skew-group-end}%4.5
Let a finite group $G$ act on $R$ by automorphisms
$\{\alpha_g\}_{g\in G}$ and let $A=R*G$
be the skew group ring. Then $R\to A$ is a split Frobenius extension.
Moreover, ${}_RA_R\cong\bigoplus_{g\in G}{}_RR_{\alpha_g}$,
so the extension $R\to A$ need not be centrally projective.

For an $R$-module $T$, let ${}^{g}T$ denote the twisted module defined by
$r\cdot t=\alpha_{g^{-1}}(r)t$.
Then $A\otimes_RT\cong\bigoplus_{g\in G}{}^{g}T$ as left $R$-modules.
Hence, if ${}^{g}T\in\add_R(T)$, Theorem~\ref{thm:end-extension} yields a Frobenius extension
$\End_R(T) \rightarrow \End_A(A\otimes_RT)$.
In particular, this applies whenever $T$ is $G$-stable,
that is, ${}^{g}T\cong T$ for all $g\in G$. In this case
$A\otimes_RT\in\add_R(T)$, and since $R\to R*G$ is split, the ascent
and descent results show that $T$ is Wakamatsu tilting over $R$ if and only if
$A\otimes_R T$ is Wakamatsu tilting over $A$. Thus this example yields preservation and reflection of Wakamatsu
tilting modules even when the Frobenius extension is not centrally
projective.
\end{exm}

\vskip 10pt

\noindent {\bf Acknowledgements.}\quad This work was supported by the
Natural Science Foundation of Chongqing, China (No. CSTB2025NSCQ-GPX1014).

\bibliography{}

\vskip 10pt

{\footnotesize \noindent Wei Ren, Chunxia Zhang\\
 School of Mathematical Sciences, Chongqing Normal University, Chongqing 401331, PR China\\
 E-mail: {\tt wren$\symbol{64}$cqnu.edu.cn}, {\tt cxzhang$\symbol{64}$cqnu.edu.cn} }

\end{document}